\numberwithin{equation}{section}
\newtheorem{theorem}{Theorem}[section]
\newtheorem{lemma}[theorem]{Lemma}
\newtheorem{proposition}[theorem]{Proposition}
\newtheorem{definition}{Definition}[section]
\newtheorem{corollary}[theorem]{Corollary}
\newtheorem{remark}[theorem]{Remark}
\newcommand{\cl}[1]{\mathcal{#1}} %\cl A
\newcommand{\bb}[1]{\mathbb{#1}}
\newcommand{\sca}[1]{\left(#1\right)} %\sca{x,y}
\newcommand{\dua}[1]{\langle#1\rangle}
\newcommand{\nor}[1]{\left\Vert #1\right\Vert}
\def\eps{\varepsilon}
\def\gl{\lambda}
\def\gg{\gamma}
\def\go{\omega}
\def\ga{\alpha}
\begin{document}

\title[Homomorphisms of $L^1$ algebras and Fourier algebras]{Homomorphisms of $L^1$ algebras and Fourier algebras}

\author{M. Anoussis,  G. K. Eleftherakis, A. Katavolos}

\address{M. Anoussis\\Department of Mathematics\\ University of the Aegean\\
832 00 Karlovassi, Greece }\email{mano@aegean.gr}

\address{G. K. Eleftherakis\\Department of Mathematics\\ Faculty of Sciences\\
University of Patras\\265 00 Patra, Greece }
\email{gelefth@math.upatras.gr}

\address{A. Katavolos\\Department of Mathematics\\
National and Kapodistrian University of Athens\\
1578 84 Athens, Greece }\email{akatavol@math.uoa.gr}

\keywords{Fourier algebras, group algebras, algebra homomorphisms, piecewise affine maps}
\thanks{2010 {\it Mathematics Subject Classification.} 43A20, 43A22, 43A30\\
}

\begin{abstract}   
We investigate conditions for the extendibility of continuous algebra homomorphisms 
$\phi$ from the Fourier algebra $A(F)$ of a locally compact group $F$ to  the Fourier-Stieltjes  algebra $B(G)$ 
of a locally compact group $G$ to maps between the corresponding $L^\infty$ algebras 
which are weak* continuous. 
When $\phi$ is completely bounded and $F$ is amenable, it is induced by a piecewise affine
map $\alpha: Y\to F$ where $Y\subseteq G$. 
We show that extendibility of $\phi$ is equivalent to $\alpha$ being an {\em open} map. 
\\
We also study the dual problem for contractive homomorphisms \\ 
$\phi: L^1(F)\to M(G)$.   
We show that $\phi$ induces a w* continuous homomorphism between the von Neumann 
algebras of the groups if and only if the naturally associated map $\theta$ 
(Greenleaf [1965], Stokke [2011]) is a {\em proper} map. 
\end{abstract}

\maketitle

\section{Introduction}\label{10000}

In this paper we investigate conditions under which homomorphisms between algebras associated to 
locally compact groups extend to von Neumann  algebras containing them.    

If $G$ is a locally compact group, the group algebra $L^1(G)$ as well as the measure algebra $M(G)$ act on $L^2(G)$
by convolution; their weak* closure is the von Neumann algebra of the group, $VN(G)$. 
It is thus a natural problem to determine conditions under which an algebra homomorphism 
from $L^1(F)$ to $M(G)$ induces a w*-continuous map between the corresponding von Neumann algebras.

The Fourier and Fourier-Stieltjes algebras of $G$ also act on $L^2(G)$ by (pointwise) multiplication, and their 
w* closure is the multiplication algebra of $L^\infty(G)$.   
Hence the corresponding problem about algebra homomorphisms from $A(F)$ to $B(G)$ is also natural. 

The Fourier and Fourier-Stieltjes algebras of a locally compact group $G$ were introduced
by Eymard in \cite{eym} and have  since been a main object of study in the area of 
Noncommutative Harmonic Analysis.

Recall that the Fourier-Stieltjes algebra $B(G)$ 
is the set of all coefficient functions 
$s\to\sca{\pi(s)\xi,\eta}, \ (\xi,\eta\in H_\pi)$ defined by unitary representations 
$(\pi, H_\pi)$ of $G$, while the Fourier algebra $A(G)$ of $G$ consists of the coefficients 
of the left regular representation $s\to\gl_s$ on $L^2(G)$, given by $\gl_s\xi(t):=\xi(s^{-1}t)$.

When $G$ is abelian, the Fourier algebra is isometrically isomorphic  to the group algebra  $L^1(\hat G)$
of the dual group  $\hat G$  of $G$ and the Fourier-Stieltjes algebra  is isometrically isomorphic to the
measure algebra $M(\hat G)$. These isomorphisms are realised through the Fourier transform.

An important problem in Harmonic Analysis is the determination of all
homomorphisms $\phi: A(F)\rightarrow B(G)$, for locally compact groups $F$ and $G$.

A bounded homomorphism  $\phi: A(F)\rightarrow B(G)$  always takes the form
$ \phi(u) =\phi_\ga(u):=\chi_Y(u\!\circ\!\ga)$ for an open subset $Y\subseteq G$ and a continuous map $\ga:Y\to F$.

Cohen \cite{co} completely characterized homomorphisms $L^1(F) \rightarrow M(G)$ 
for locally compact {\em abelian}
groups in terms of piecewise affine maps between the dual groups $\hat G$  and $\hat F$. 
In view of the above remark,
it follows that the result of Cohen provides a complete
characterisation of  homomorphisms $A(F) \rightarrow B(G)$ for abelian groups in terms of piecewise affine maps 
$\alpha: G \rightarrow F$.
To obtain his result he used a characterization of the idempotents in $B(G)$ which he proved in \cite{c}. 
This result revealed the fundamental role of the open coset ring of the group $G$.
Host \cite{h} generalized the characterization of  idempotents of $B(G)$ to general locally compact groups.

Ilie and Spronk used the natural operator space structure of the Fourier algebra, as the predual of the von 
Neumann algebra of the group,  to prove that  if $F$ is amenable,  a completely bounded homomorphism
$\phi:A(F) \rightarrow B(G)$ is induced by a continuous piecewise affine map $\alpha$ as above
\cite{IS, daws}. This generalises the result of Cohen.

Pham  in  \cite{ph}, characterized the contractive homomorphisms $\phi: A(F)\rightarrow B(G)$.
 However, the general case remains open.

In the first part of this paper we  prove that if $F$ and $G$ are locally compact second countable groups, 
and the homomorphism $\phi_\ga$  is induced by  a mixed continuous piecewise affine map $\alpha$ 
(see Definition \ref{1111}), then $\phi_\ga$ extends   to a w*-w* continuous map from $L^{\infty}(F)$ to 
$L^{\infty}(G)$ if and only if $\alpha$  is an open map.  For  amenable $F$, the maps $\phi_\ga$ that we 
consider strictly include those covered by  the results of Ilie and Spronk \cite{IS}.  

Note, for comparison, that  Ilie and Stokke characterised in \cite{IST} the  maps from $A(F)$ to $B(G)$ 
induced by piecewise affine maps that extend to maps from $B(F)$ to $B(G)$ which are continuous 
for the respective w* topologies.

\medskip

In the second part of the paper we   study the dual problem: we 
provide necessary and sufficient   
conditions under which contractive homomorphisms from $L^1(F)$ to the measure algebra 
$M(G)$ induce   homomorphisms
between the corresponding von Neumann algebras  which are weak* continuous.

We note that for $F$ and $G$ abelian, the homomorphisms $\phi: L^1(F)\to M(G)$ are characterized 
by the result of Cohen mentioned above. 

To motivate our approach to the dual problem, consider  
 a continuous homomorphism  $\theta: F\rightarrow G$ between abelian groups $F$ and $G$. 
Then the map $\phi:  L^1 (F ) \to M(G)$  defined by  $\phi(f ) =\theta_* f$ (considering $f$ as a measure) is a
continuous homomorphism. Taking Fourier transforms we obtain a homomorphism $A(\hat{F})\to B(\hat{G})$, 
of the form $u \mapsto u\circ \hat{\theta}$. Then it follows from the results of the first part that this map 
extends to a  w*-w* continuous map from $L^{\infty}(\hat F)$ to $L^{\infty}(\hat G)$ if and only if
$\hat \theta$  is an open map. But it is known \cite[Theorem 8]{dr} that the map 
$\hat \theta $ is open if and only if $\theta$ 
is  proper. Hence,  taking  Fourier transforms again, 
we conclude that the map $\phi$ induces a   w*-continuous *-homomorphism
$VN(F) \to VN(G)$ if and only if $\theta$  is a proper map. See Section \ref{3000} for details.

This result suggests that a similar condition should hold 
for general locally compact  groups.

The contractive homomorphisms $\phi: L^1(F)\to M(G)$ for locally compact groups $F$ and $G$  
were characterized by Greenleaf \cite {green}.
A more detailed description is given by Stokke  \cite{sto2011}.

The structure of such  homomorphisms  is more
complicated than  in the above special case. 
Nevertheless, we prove in Section \ref{4} that there still is a properness condition
which is necessary and sufficient for the existence  of a w*-continuous 
induced map.

As a consequence, we give a new proof of a result due to Stokke \cite{sto2012}, that the properness 
condition mentioned above is equivalent to the w*-continuity of the extended map $\varphi: M(F) \to M(G)$.

Our approach relies on operator algebra methods and exploits the duality between
the Fourier algebra and the von Neumann algebra of a group. 

\medskip
\noindent {\bf Notation } We use $\sca{\xi,\eta}$ for the scalar product in a Hilbert space (linear in $\xi$) and 
$\dua{x^*,x}$ for the bilinear pairing between a vector $x$ in a Banach space $X$ and a linear functional 
$x^*$ in its dual. The symbol $\mu\ll\nu$ denotes absolute continuity for measures $\mu$ and $\nu$ (i.e. every 
$\nu$-null set is $\mu$-null). For a locally compact group $G$, we denote by $g\mapsto \gl_g$ or $\gl_g^G$ 
its left regular representation and by $\mu\mapsto \lambda^G(\mu), \ \mu\in M(G)$ 
its `integrated form' (see section \ref{4}).

%%%%%%%%%%%%%%%%%%%%%%%%%%%%%%%%%%%%%%%%%

\section{Homomorphisms between Fourier Algebras which extend to the corresponding masas.}
\label{20000}

Let $G$ and $F$  be  locally compact  second countable groups with Haar measures $\mu$ and $\nu$ respectively. 
We are concerned with conditions on a homomorphism 
$\phi: A(F)\to B(G)$ which allow extension to a mapping from $L^\infty(F)$ to $L^\infty(G)$.
Consider a bounded homomorphism  $\phi: A(F)\rightarrow B(G)$  and let 
$$Y=\{t\in G: \exists u\in A(F), \;\;\phi(u)(t)\neq 0\}.$$ 
Then $Y$ is an open set and there exists a continuous map 
$\alpha: Y\to F$, such that for all $u\in A(F),$ we have
 $$
\phi (u)(t)=\phi_\alpha(u)(t):=\left\{\begin{array}{ll} u(\ga (t)), & t\in Y\\  
0, & t\in G\setminus Y \end{array}\right.
$$ 
see \cite[p. 101]{kanlau}.

\begin{proposition} \label{pr21}
If $\phi$ extends to a w*-continuous map from $L^\infty(F)$ to $L^\infty(G)$, 
then the extension is necessarily  a $*$-homomorphism, and in fact is given 
on bounded measurable functions by the same formula as above.
It follows that the measure $\alpha_*\mu$ defined on Borel subsets $E \subseteq F$ 
by $\alpha _*\mu(E)= \mu(\alpha^{-1}(E ))$  satisfies $\ga_*\mu\ll\nu$. 

Conversely if $\ga_*\mu\ll\nu$ then the above formula defines a necessarily w*-continuous homomorphism 
$\tilde\phi: L^\infty(F)\to L^\infty(G)$ which extends $\phi$.
\end{proposition}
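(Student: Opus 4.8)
The plan is to prove the two implications separately; in both, the crux is a weak* density argument transferring the formula from $A(F)$ to characteristic functions, together with a change-of-variables computation.

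\textbf{($\Rightarrow$)} Suppose $\Phi\colon L^\infty(F)\to L^\infty(G)$ is weak* continuous and extends $\phi=\phi_\ga$. First I would record that $A(F)$ is weak* dense in $L^\infty(F)$ (being a dense subalgebra of $C_0(F)$, its preannihilator in $L^1(F)$ is $\{0\}$) and that $\phi$ is a $*$-homomorphism of $A(F)$ (immediate from $\phi_\ga(u)=\chi_Y\,(u\circ\ga)$, since $\chi_Y$ is a real idempotent and $(\,\cdot\,)\circ\ga$ is multiplicative and conjugation-preserving). Since multiplication and the involution on a von Neumann algebra are separately weak* continuous, a routine density argument (multiplicativity and $*$-preservation pass from $A(F)$ to $L^\infty(F)$ one variable at a time) promotes $\Phi$ to a $*$-homomorphism on all of $L^\infty(F)$. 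Next I would identify $\Phi$ on projections: using regularity of $A(F)$ and $\sigma$-compactness of $F$ one picks $u_n\in A(F)$ with $0\le u_n\le1$ and $u_n\to 1$ pointwise, whence $u_n\to1$ weak* in $L^\infty(F)$ and so $\Phi(1)=\chi_Y$ (take weak* limits in $\Phi(u_n)=\chi_Y(u_n\circ\ga)$, using dominated convergence against $L^1(G)$). The analogous argument for an open set $E\subseteq F$ (write $E=\bigcup_n K_n$ with $K_n$ compact increasing, choose $u_n\in A(F)$ with $0\le u_n\le1$, $u_n=1$ on $K_n$, $\supp u_n\subseteq E$, so $u_n\to\chi_E$ pointwise) yields $\Phi(\chi_E)=\chi_{\ga^{-1}(E)}$. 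Now the family of Borel sets $E\subseteq F$ with $\Phi(\chi_E)=\chi_{\ga^{-1}(E)}$ contains the $\pi$-system of open sets and is a $\lambda$-system — closure under proper differences uses $\Phi(1)=\chi_Y$ and $\ga^{-1}(F)=Y$, and closure under increasing unions uses weak* continuity of $\Phi$ — so by Dynkin's theorem it is all of $\mathcal B(F)$. Applying this to a $\nu$-null set $N$ gives $\chi_{\ga^{-1}(N)}=\Phi(0)=0$, i.e.\ $\ga_*\mu\ll\nu$; consequently $g\mapsto\chi_Y\,(g\circ\ga)$ is a well-defined element of $L^\infty(G)$, it equals $\Phi(g)$ for simple $g$ by linearity, and hence, $\Phi$ being contractive, for every bounded measurable $g$.

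\textbf{($\Leftarrow$)} Assume $\ga_*\mu\ll\nu$. If $g=g'$ $\nu$-a.e.\ then $\{g\neq g'\}$ is $\nu$-null, so $\ga^{-1}(\{g\neq g'\})$ is $\mu$-null by $\ga_*\mu\ll\nu$, i.e.\ $g\circ\ga=g'\circ\ga$ $\mu$-a.e.\ on $Y$; hence $\tilde\phi(g):=\chi_Y\,(g\circ\ga)$ is a well-defined contraction $L^\infty(F)\to L^\infty(G)$. It is a $*$-homomorphism (since $\chi_Y^2=\chi_Y$ and $(\,\cdot\,)\circ\ga$ is a $*$-homomorphism) and it agrees with $\phi_\ga$ on $A(F)$. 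The one substantive point is weak* continuity. For $f\in L^1(G)$, change of variables gives $\dua{\tilde\phi(g),f}=\int_Y(g\circ\ga)\,f\,d\mu=\int_F g\,d\big(\ga_*(f\,\mu|_Y)\big)$. The complex measure $\ga_*(f\,\mu|_Y)$ is finite and, by $\ga_*\mu\ll\nu$, absolutely continuous with respect to $\nu$, so Radon–Nikodym provides $h_f\in L^1(F)$ with $\ga_*(f\,\mu|_Y)=h_f\,\nu$ and $\dua{\tilde\phi(g),f}=\dua{g,h_f}$. As $f\mapsto h_f$ is a contraction $L^1(G)\to L^1(F)$, this exhibits $\tilde\phi$ as its adjoint, hence weak* continuous; and it is the unique weak* continuous extension of $\phi$ because $A(F)$ is weak* dense.

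The step I expect to be the main obstacle is, in $(\Rightarrow)$, the passage from ``$\Phi$ agrees with $\phi_\ga$ on $A(F)$'' to ``$\Phi$ is given by the same pointwise formula on $L^\infty(F)$'': one cannot compare $\Phi$ directly with that formula, since the formula is a priori meaningless until $\ga_*\mu\ll\nu$ has been established. Extracting this absolute continuity is precisely the role of the $\lambda$-system argument on characteristic functions, and the delicate point there is that the limit steps must be taken in the weak* topology (tested against $L^1$), not in norm. By comparison, the converse is routine once one is prepared to push measures forward along $\ga$.
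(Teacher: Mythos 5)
Your proof is correct, and on the forward implication it is essentially the paper's argument: both transfer the formula from $A(F)$ to characteristic functions of open sets by a pointwise‑bounded (hence weak*) approximation, then propagate it to all Borel sets by a set‑class argument, and read off $\alpha_*\mu\ll\nu$ by applying the formula to a $\nu$-null set. The differences there are cosmetic: you invoke regularity of $A(F)$ to approximate $\chi_E$ directly by elements of $A(F)$, where the paper first extends the formula to $C_0(F)$ by uniform density and then approximates with $C_0$ functions; you run a $\pi$--$\lambda$ (Dynkin) argument where the paper asserts the good sets form a $\sigma$-algebra; and you establish the $*$-homomorphism property up front by a separate‑weak*-continuity density argument, whereas the paper simply reads it off once the pointwise formula is in hand (your extra step is harmless but not needed). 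The genuine divergence is in the converse: to get weak* continuity of $\tilde\phi$ the paper shows that $u\mapsto\dua{\tilde\phi(u),h}$ is completely additive for positive $h\in L^1(G)$ and appeals to the fact that completely additive positive functionals on a von Neumann algebra are normal, while you exhibit an explicit preadjoint $f\mapsto h_f$ with $h_f$ the Radon--Nikodym derivative of $\alpha_*(f\,\mu|_Y)$ with respect to $\nu$. Your route is more self‑contained and elementary (it avoids the normality criterion and gives the contractivity of the preadjoint for free from $\|\alpha_*\sigma\|\le\|\sigma\|$), at the cost of a change‑of‑variables verification; the paper's is shorter if one takes the von Neumann algebra fact as known. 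Both are valid proofs of the statement.
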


\proof Denote the extension by $\tilde\phi$. Note first that the formula 
$$
\tilde\phi (u)(t)=\left\{\begin{array}{ll} u(\ga (t)), & t\in Y\\  
0, & t\in G\setminus Y\end{array}\right.
$$
is valid for all $u\in C_0(F)$. Indeed by the density of $A(F)$ in $C_0(F)$ there is a 
sequence $(u_n)$ in $A(F)$ converging uniformly  to $u$.
 
Now let $V\subseteq F$ be an open set. 
There exists a bounded sequence $(u_n)\subseteq C_0(F)$ 
of functions supported in $V$ such that 
$u_n(s)\rightarrow \chi_V(s)$ for all $s\in F$, {where  $\chi_V$ denotes the characteristic function
of $V$}.   
By dominated convergence this sequence converges to 
$\chi_V$ in the weak-* topology. Since $\tilde\phi$ is w*-continuous, we have that 
\begin{equation}\label{typ}
\tilde\phi (\chi_V)(t)=\left\{\begin{array}{ll} \chi_V(\ga (t)), & \text{for a.a. } t\in Y\\  
0, &  \text{for a.a. } t\in G\setminus Y\end{array}\right.
\end{equation} 
The collection of all Borel sets $V\subseteq F$ for which $\tilde\phi (\chi_V)$
satisfies (\ref{typ}) is easily seen to be  a $\sigma$-algebra. 
Since it contains the open sets, it  is equal to the Borel $\sigma$-algebra.

Therefore 
\begin{equation}\label{typ2}
\tilde\phi (u)(t)=\left\{\begin{array}{ll} u(\ga (t)), &  \text{for a.a. } t\in Y\\  
0, &  \text{for a.a. } t\in G\setminus Y\end{array}\right.
\end{equation}  
holds whenever $u$ is the characteristic function of a Borel set, and hence 
(by linearity and $\nor{\cdot}_\infty$-continuity)  it holds for all functions in $L^\infty(F)$.

It is now obvious that $\tilde\phi$ is a *-homomorphism.  

Now if $E\subseteq F$ is any Borel set, $\tilde\phi(\chi_E)=\chi_{\ga^{-1}(E)}$ must be in $L^\infty(G)$. 
It follows that if $\nu(E)=0$ then $\mu(\ga^{-1}(E))$ must vanish. This shows that  $\ga_*\mu\ll\nu$. 
\smallskip

Conversely if $\ga_*\mu\ll\nu$ then (\ref{typ2}) clearly defines a *-homomorphism  $\tilde\phi$
of  $L^\infty(F)$ into $L^\infty(G)$. To see that $\tilde\phi$ is w*-continuous, let $\{E_i\}$ be a family 
of pairwise disjoint  Borel subsets of $F$ and $E:=\cup_iE_i$.  Then 
\begin{align*}
\tilde\phi(\sum_i\chi_{E_i}) &= \tilde\phi(\chi_{\cup_iE_i}) = \tilde\phi(\chi_E)
=\chi_{\ga^{-1}(E)} \\ &=\chi_{\cup_i\ga^{-1}(E_i)}=\sum_i\chi_{\ga^{-1}(E_i)}=\sum_i\tilde\phi(\chi_{E_i})\, .
\end{align*} 
It follows from this that for any nonnegative $h\in L^1(G)$ the positive linear 
form $u\to\dua{\tilde\phi(u),h}$ is completely additive, hence w*-continuous; thus for  any 
$h\in L^1(G)$ the linear form $u\to\dua{\tilde\phi(u),h}$  is  w*-continuous, 
and thus $\tilde\phi$ is w*-continuous.
\qed\medskip

\begin{remark}					
The assumption of w*-continuity of the extension cannot be omitted.
\end{remark} 
\noindent Indeed, let 
$\phi:  A(F)\rightarrow B(G)$ be as above with $Y\neq  G$ and 
 $\tilde {\phi}: L^\infty(F)\rightarrow L^\infty(G)$  be the map defined by
(\ref{typ2}). 
If $F$ is not compact, the ideal generated by $A(F)$ in $L^{\infty}(F)$ is proper.
Let $\delta:L^\infty(F)\to\bb C$ be a nonzero homomorphism  which vanishes on this ideal.
Then the map
$\phi_1: u\rightarrow \tilde \phi(u)+\delta(u)\chi_{G\setminus Y}$  
is a *-homomorphism from 
$L^\infty(F)$ to $L^\infty(G)$ which extends $\phi$ but does not satisfy the above formula. 

\bigskip

In this section we will prove that for a certain class of {naturally arising} continuous maps $\ga$ the condition  
$\ga_*(\mu)\ll \nu$ holds  if and only if $\ga$ is an open map.

The following is  Theorem 3.5 of \cite{aek}:

\begin{theorem}\label{25} Let $G$ and $F$  be locally compact second countable groups 
with Haar measures $\mu $ and $\nu $ respectively, and let
$\theta : G\rightarrow F$ be a continuous homomorphism.  The following are equivalent:

(i) $\theta _*(\mu )\ll\nu $

(ii) $\theta(G)$ is an open subgroup of $F$.
\end{theorem}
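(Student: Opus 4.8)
The plan is to prove the two implications separately. For $(i)\Rightarrow(ii)$ I would argue as follows. Since $G$ is locally compact and second countable it is $\sigma$-compact, say $G=\bigcup_nK_n$ with each $K_n$ compact; hence $\theta(G)=\bigcup_n\theta(K_n)$ is an $F_\sigma$ subset of $F$, in particular $\nu$-measurable. Now the key (and easily overlooked) observation is the trivial identity $\theta^{-1}(\theta(G))=G$, which gives $\theta_*(\mu)(\theta(G))=\mu(G)>0$; since $\theta_*(\mu)\ll\nu$ this forces $\nu(\theta(G))>0$. By the Steinhaus--Weil theorem the set $\theta(G)\theta(G)^{-1}$ then contains a neighbourhood of the identity of $F$; but $\theta(G)$ is a subgroup, so $\theta(G)\theta(G)^{-1}=\theta(G)$, and a subgroup that contains a neighbourhood of the identity is open, being a union of left translates of that neighbourhood. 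Thus $\theta(G)$ is an open subgroup of $F$.

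For $(ii)\Rightarrow(i)$, set $H=\theta(G)$. Since $H$ is an open (hence closed) subgroup, $\nu|_H$ is a Haar measure on $H$, and $\theta\colon G\to H$ is a continuous \emph{surjective} homomorphism of second countable locally compact groups. Writing $N=\ker\theta$ and $\pi\colon G\to G/N$ for the quotient map, $\theta$ factors as $G\xrightarrow{\pi}G/N\xrightarrow{\bar\theta}H$ with $\bar\theta$ a continuous bijective homomorphism, hence --- $G/N$ and $H$ being second countable and locally compact --- a topological isomorphism by the open mapping theorem. Weil's quotient integral formula then gives $\mu(\pi^{-1}(B))=m(N)\,\bar m(B)$ for every Borel $B\subseteq G/N$, where $m$ and $\bar m$ are suitably normalised Haar measures on $N$ and on $G/N$ and $m(N)\in(0,\infty]$, with the convention $\infty\cdot 0=0$. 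Since $\theta^{-1}(E)=\pi^{-1}(\bar\theta^{-1}(E\cap H))$ for Borel $E\subseteq F$, and since $\bar\theta_*\bar m$ is a Haar measure on $H$ and hence equals $c\,\nu|_H$ for some $c\in(0,\infty)$, we obtain $\theta_*(\mu)(E)=m(N)\,c\,\nu(E\cap H)$. In particular $\nu(E)=0$ implies $\nu(E\cap H)=0$ and hence $\theta_*(\mu)(E)=0$; that is, $\theta_*(\mu)\ll\nu$.

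I expect the difficulties here to be bookkeeping rather than conceptual. In $(ii)\Rightarrow(i)$ one must allow a non-compact kernel $N$, where $m(N)=\infty$ and $\theta_*(\mu)$ takes only the values $0$ and $\infty$, so absolute continuity has to be read strictly on the level of null sets, and one should track normalisations in Weil's formula (though in the end only the null sets matter). Second countability is used essentially once --- to guarantee that $\theta(G)$ is $\nu$-measurable in the first direction --- and the statement genuinely fails without it. The substantive inputs, namely the Steinhaus--Weil theorem, the open mapping theorem for Polish (equivalently, second countable locally compact) groups, and Weil's quotient integral formula, are all classical; the one move that makes the proof short is recognising that $\theta^{-1}(\theta(G))=G$ has full $\mu$-measure, which converts $\theta_*(\mu)\ll\nu$ into the positivity statement $\nu(\theta(G))>0$ and thereby, via Steinhaus--Weil, into the openness of $\theta(G)$.
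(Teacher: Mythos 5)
Your proof is correct, but note that the paper does not actually contain a proof of this statement: it is imported verbatim as Theorem 3.5 of \cite{aek}, so there is no in-paper argument to compare against. Your two implications use the standard ingredients one would expect here, and both are sound. Two points are worth making explicit. In $(i)\Rightarrow(ii)$, the Steinhaus--Weil theorem is normally stated for a measurable set of positive \emph{finite} measure; since $\theta(G)=\bigcup_n\theta(K_n)$ and $\nu(\theta(G))>0$, you should pass to a single compact piece $\theta(K_n)$ of positive (hence finite) measure and apply Steinhaus to that, which then gives a neighbourhood of the identity inside $\theta(K_n)\theta(K_n)^{-1}\subseteq\theta(G)$. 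In $(ii)\Rightarrow(i)$, the open mapping theorem you invoke requires $G$ to be $\sigma$-compact and the target to be a Baire space; both follow from second countability and the openness of $H=\theta(G)$ in $F$, so this is fine, but it is the step where second countability of $G$ is genuinely used (not only for the measurability of $\theta(G)$ in the first direction, as you suggest). Finally, your caution about reading $\theta_*(\mu)\ll\nu$ strictly at the level of null sets is exactly right: the paper defines absolute continuity this way, so the failure of $\theta_*(\mu)$ to be locally finite when $\ker\theta$ is non-compact causes no difficulty in the Weil-formula computation.
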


\begin{definition}
Let $G$ and $F$  be locally compact  groups, $K$ a subgroup of $G$  and $C$ 
a left coset of $K$  in $G$. A map $\alpha: C \rightarrow F$ is
called affine (resp. anti-affine) if there exists  a 
homomorphism  
(resp. anti-homomorphism) $\theta : K\rightarrow F$ and elements $s_0\in F, t_0\in G$ 
such that $C=t_0^{-1}K$ and 
$$\alpha(t)=s_0\theta (t_0t)$$ 
for all $t \in C=t_0^{-1}K$. 
\end{definition}

\begin{corollary}
\label{255} Let $G$ and $F$  be locally compact second countable groups with 
Haar measures $\mu $ and $\nu $ respectively. Let $C$ be a coset of an open subgroup 
$K$ of $G$ and $\alpha: C\rightarrow F$
a continuous affine or anti-affine map. The following are equivalent:

(i) The measure $\alpha_*\mu$ satisfies $\alpha_*\mu \ll\nu $. 

(ii) $\alpha(C)$ is an open subset of $F$.
\end{corollary}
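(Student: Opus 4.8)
The plan is to reduce the statement to the homomorphism case, Theorem~\ref{25}, by absorbing the translations built into an affine (or anti-affine) map into null-set-preserving homeomorphisms of $G$ and of $F$.

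Write $C=t_0^{-1}K$ and $\alpha(t)=s_0\theta(t_0t)$ as in the definition, where $\theta\colon K\to F$ is a continuous homomorphism (affine case) or anti-homomorphism (anti-affine case). Since $K$ is open in $G$ it is a locally compact second countable group and the restriction $\mu|_K$ is a left Haar measure on $K$; together with the left Haar measure $\nu$ on $F$, this puts us in position to apply Theorem~\ref{25} with $K$ in the role of $G$. First I would record that the restriction to $K$ of left translation by $t_0^{-1}$ is a homeomorphism $L_{t_0^{-1}}\colon K\to C$ carrying $\mu|_K$ onto $\mu|_C$ (by left invariance of $\mu$), so that $\alpha_*\mu=(\alpha\circ L_{t_0^{-1}})_*(\mu|_K)$, and a direct computation gives $\alpha(t_0^{-1}k)=s_0\theta(k)$ for $k\in K$.

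In the affine case this yields $\alpha\circ L_{t_0^{-1}}=L_{s_0}\circ\theta$ on $K$, where $L_{s_0}$ is left translation by $s_0$ on $F$; since $L_{s_0}$ is a $\nu$-preserving homeomorphism of $F$, we get $\alpha_*\mu\ll\nu\iff\theta_*(\mu|_K)\ll\nu$, which by Theorem~\ref{25} holds iff $\theta(K)$ is an open subgroup of $F$. In the anti-affine case I would pass to $\psi:=j\circ\theta$, where $j\colon F\to F$ is inversion; then $\psi(k_1k_2)=\theta(k_1k_2)^{-1}=(\theta(k_2)\theta(k_1))^{-1}=\psi(k_1)\psi(k_2)$, so $\psi$ is a genuine homomorphism, $\theta=j\circ\psi$, and $\alpha\circ L_{t_0^{-1}}=L_{s_0}\circ j\circ\psi$. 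One then verifies that $\nu(E)=0\iff\nu(E^{-1})=0$ for Borel $E\subseteq F$ (this uses continuity and positivity of the modular function $\Delta$, via $\nu(E^{-1})=\int\chi_E(s)\Delta(s)^{-1}\,d\nu(s)$), so that $j$, like $L_{s_0}$, preserves the ideal of $\nu$-null sets; hence again $\alpha_*\mu\ll\nu\iff\psi_*(\mu|_K)\ll\nu\iff\psi(K)$ is an open subgroup of $F$. Finally $\alpha(C)=(\alpha\circ L_{t_0^{-1}})(K)$ equals $s_0\theta(K)$ in the affine case and $s_0\,\psi(K)^{-1}=s_0\psi(K)$ in the anti-affine case (the image of an anti-homomorphism is again a subgroup, hence inversion-stable), so in both cases $\alpha(C)$ is a left coset of a subgroup $H$ of $F$; since a coset of $H$ is open precisely when $H$ is open, this closes the equivalence (i)$\iff$(ii).

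I expect the only delicate point to be the anti-affine case, namely making sure the passage through inversion does not disturb absolute continuity — this rests on $\Delta$ being continuous and strictly positive, not on $F$ being unimodular. The remaining ingredients (that $\mu|_K$ is a Haar measure on the open subgroup $K$, that left translations preserve Haar measure, that images of homomorphisms and anti-homomorphisms are subgroups) are routine.
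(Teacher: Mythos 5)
Your proof is correct and follows essentially the same route as the paper, which also writes $\alpha=l_{s_0}\circ\theta\circ l_{t_0}$ and reduces to Theorem~\ref{25} after noting that $\mu_K$ is a Haar measure on the open subgroup $K$ and that the translations do not affect openness or absolute continuity. The paper dismisses the anti-affine case as ``similar''; your explicit treatment via composition with inversion and the modular function is a correct way to fill in that step.
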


\begin{proof} 
We prove the corollary for affine maps. The proof for  anti-affine maps is similar.
Write
$$\alpha=l_{s_0}\circ \theta\circ l_{t_0}, $$ 
where $l_x$ denotes left translation by $x$. 
It is clear that $\alpha(C)$ is open if and only if $\theta(K)$ is open
and  that $\alpha_*{\mu}\ll\nu $ if and only if 
$\theta_*\mu_K\ll\nu $, where $\mu_K$ is the restriction of $\mu$ to $K$.
Since $K$ is open, $\mu_K$ is a Haar measure for $K$, and Theorem \ref{25}  yields the result. 
\end{proof}
 
We shall need the  following 
definition of mixed piecewise affine maps, which is a modification of the one in \cite{spronk2}.
Note that we have included continuity in the definition of the maps. 

If $G$ is a locally compact group, the {\em open coset ring}   $\Omega_0(G)$ of $G$ 
is the smallest ring of subsets of $G$ containing the open cosets.

\begin{definition}\label{1111} 
Let $G$ and $F$ be locally compact groups and $Y$ an open and closed (clopen) subset of $G.$ 
A map $\alpha: Y\rightarrow F$ is called {\em mixed  piecewise affine} if there exist 
disjoint clopen sets $\{Y_i: i=1,...,m\}\subseteq\Omega_0(G)$ such that $Y=\cup_{i=1}^mY_i$ 
and open cosets $C_i$ of $G$ such that $Y_i\subseteq C_i$ and affine  
or anti-affine \emph{continuous} maps $\alpha_i: C_i\rightarrow F$
 such that  $\alpha|_{Y_i}=\alpha_i|_{Y_i}.$ 

If all the $\alpha_i: C_i\rightarrow F$ are affine maps, $\alpha$ is called \textit{piecewise affine}.
\end{definition} 

Recall that a second countable locally compact space is metrisable.

\begin{theorem}\label{52}
Let $G$ and $F$ be locally compact second countable groups and $Y\subseteq G$ be a  clopen set. Let
$\alpha: Y\rightarrow F$ be a mixed piecewise affine map and $m, Y_i, C_i, \alpha_i$, for
$i=1, 2, \dots, m$ be as in Definition \ref{1111}.
The following are equivalent:

\begin{enumerate}
\item $\alpha_*\mu_Y\ll \nu$
\item   the sets
$\alpha_i(C_i),  i=1,\dots, m$ are open in $F.$
\end{enumerate}
\end{theorem}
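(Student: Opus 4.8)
The plan is to prove the two implications separately, the direction $(1)\Rightarrow(2)$ being the substantial one. For $(2)\Rightarrow(1)$, assume each $\ga_i(C_i)$ is open. Corollary \ref{255} applied to each continuous affine or anti-affine map $\ga_i\colon C_i\to F$ gives $(\ga_i)_*(\mu|_{C_i})\ll\nu$. Since $Y_i\subseteq C_i$, the restriction $\mu|_{Y_i}$ is dominated by $\mu|_{C_i}$, and $\ga$ agrees with $\ga_i$ on $Y_i$, so $\ga_*(\mu|_{Y_i})=(\ga_i)_*(\mu|_{Y_i})\le(\ga_i)_*(\mu|_{C_i})\ll\nu$. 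As $Y$ is the disjoint union of the $Y_i$, we have $\ga_*\mu_Y=\sum_{i=1}^m\ga_*(\mu|_{Y_i})$, a finite sum of measures each $\ll\nu$, whence $\ga_*\mu_Y\ll\nu$.

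For $(1)\Rightarrow(2)$, assume $\ga_*\mu_Y\ll\nu$ and fix $i$ (we may clearly assume $Y_i\neq\emptyset$). Since $\mu|_{Y_i}\le\mu_Y$, pushing forward gives $(\ga_i)_*(\mu|_{Y_i})=\ga_*(\mu|_{Y_i})\le\ga_*\mu_Y\ll\nu$. The next step is to reduce to the homomorphism case covered by Theorem \ref{25}. Treating the affine case (the anti-affine one is analogous, replacing left by right translations in $F$ and using that the modular function does not affect mutual absolute continuity), write $\ga_i=l_{s_0}\circ\theta_i\circ l_{t_0}$ with $\theta_i\colon K_i\to F$ a continuous homomorphism and $C_i=t_0^{-1}K_i$. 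Put $Y_i'=t_0Y_i\subseteq K_i$; since the coset ring is translation invariant and its members are clopen, $Y_i'$ is a \emph{nonempty clopen subset} of the open subgroup $K_i$. Using left invariance of $\mu$ and of $\nu$, one computes $(\ga_i)_*(\mu|_{Y_i})=(l_{s_0})_*(\theta_i)_*(\mu_{K_i}|_{Y_i'})$, where $\mu_{K_i}$ is the Haar measure of $K_i$ (the restriction of $\mu$); since $(l_{s_0})_*$ preserves $\ll\nu$, we obtain $(\theta_i)_*(\mu_{K_i}|_{Y_i'})\ll\nu$.

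The heart of the argument is to upgrade this to $(\theta_i)_*(\mu_{K_i})\ll\nu$, for then Theorem \ref{25} gives that $\theta_i(K_i)$, and hence $\ga_i(C_i)=s_0\theta_i(K_i)$, is open in $F$. Since $Y_i'$ is a nonempty open subset of the second countable, hence Lindel\"of, group $K_i$, picking $y_0\in Y_i'$ makes $V:=y_0^{-1}Y_i'$ a neighbourhood of the identity, so $K_i$ is covered by countably many left translates $k_nV$. By left invariance of $\mu_{K_i}$ one has $\mu_{K_i}|_{k_nV}=(l_{k_ny_0^{-1}})_*(\mu_{K_i}|_{Y_i'})$, hence $\mu_{K_i}\le\sum_n(l_{k_ny_0^{-1}})_*(\mu_{K_i}|_{Y_i'})$. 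Applying $(\theta_i)_*$ and using $\theta_i\circ l_g=l_{\theta_i(g)}\circ\theta_i$, each summand becomes $(l_{\theta_i(k_ny_0^{-1})})_*\big((\theta_i)_*(\mu_{K_i}|_{Y_i'})\big)$, which is $\ll\nu$ since left translation on $F$ fixes $\nu$; a countable sum of measures $\ll\nu$ is again $\ll\nu$, so $(\theta_i)_*(\mu_{K_i})\ll\nu$, completing the proof.

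I expect the bootstrapping in the previous paragraph to be the main obstacle: a priori we only control the $\theta_i$-pushforward of $\mu_{K_i}$ restricted to the proper clopen piece $Y_i'$, and we must recover control on all of $K_i$; the covering argument does this cheaply, using only openness of $Y_i'$ and $\sigma$-compactness of $K_i$. (An alternative is to observe that a nonempty element of $\Omega_0(G)$ contained in $K_i$ contains a coset of an open subgroup $K'\le K_i$ — intersect the finitely many excluded open subgroups and partition into its cosets — then apply Theorem \ref{25} to $\theta_i|_{K'}$ to get $\theta_i(K')$ open, and use that a subgroup of $F$ containing an open subgroup is open.) Everything else is routine: pushforward preserves domination and absolute continuity of measures, a restriction is dominated by the ambient measure, translations preserve $\ll\nu$, and countable sums of $\ll\nu$ measures are $\ll\nu$.
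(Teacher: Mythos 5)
Your proof is correct. The direction $(2)\Rightarrow(1)$ coincides with the paper's: apply Corollary \ref{255} to each $\ga_i$ on $C_i$, restrict to $Y_i$, and sum the finitely many pushforwards. For $(1)\Rightarrow(2)$ you take a genuinely different route. The paper argues in three lines: $\ga_i(C_i)$ is a coset, and a ($\sigma$-compact) coset of positive Haar measure is open, so it suffices to rule out $\nu(\ga_i(C_i))=0$; that would give $\nu(\ga_i(Y_i))=0$, whence by absolute continuity $\mu(Y_i)\le\ga_*\mu_Y(\ga_i(Y_i))=0$, contradicting the openness of the nonempty $Y_i$. You instead bootstrap the hypothesis $(\theta_i)_*(\mu_{K_i}|_{Y_i'})\ll\nu$, which a priori only concerns the piece $Y_i'$, up to $(\theta_i)_*\mu_{K_i}\ll\nu$ on all of $K_i$, by covering the Lindel\"of group $K_i$ with countably many left translates of $y_0^{-1}Y_i'$ and using equivariance of the pushforward under translation; Theorem \ref{25} then finishes. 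Your argument is longer, but it has the structural appeal of deriving both implications from the single equivalence of Theorem \ref{25}/Corollary \ref{255}, and it avoids a direct appeal to the Steinhaus--Weil fact that a measurable coset of positive Haar measure is open (which the paper uses without comment). Both proofs tacitly assume each $Y_i$ is nonempty, which is indeed needed for $(1)\Rightarrow(2)$ as stated, and both defer the anti-affine case to the symmetric argument with right translations; these are harmless. Your covering identity $(l_g)_*(\mu_{K_i}|_{A})=\mu_{K_i}|_{gA}$ and the commutation $\theta_i\circ l_g=l_{\theta_i(g)}\circ\theta_i$ are used correctly, so the bootstrapping step is sound.
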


\begin{proof}
{\it (1) implies (2): }
Let $i=1,\dots,m$. We show that $\ga_i(C_i)$ is open. Since $\ga_i(C_i)$ is a coset, it suffices to show
that $\nu(\ga_i(C_i))>0$. But if $\nu(\ga_i(C_i))=0$ then $\nu(\ga_i(Y_i))=0$, so 
$\alpha_*\mu_{Y_i}(\alpha_i(Y_i))=\mu(Y_i)=0$. 
Since $Y_i$ is open, this is a contradiction.
\medskip

\noindent{\it (2) implies (1): } Suppose first that $m=1$, so that $\ga$ is the restriction of a continuous affine 
or anti-affine map.  If $\alpha(C)$ is  open, then  by Corollary \ref{255}, 
$\alpha_*\mu_{C}\ll \nu$, and a fortiori  $\alpha_*\mu_{Y}\ll \nu$. 

For the general case, suppose that the sets $\alpha_i(C_i)$ are open. By the special case just proved, we have 
${\ga_i}_*\mu_{Y_i}\ll \nu$ for each $i$.
Since $\sum {\alpha_i}_*\mu_{Y_i}=\alpha_*\mu_Y$, 
it follows that $\alpha_*\mu_Y\ll \nu$.
 \end{proof}

\begin{remark}\label{re}
Note that the condition that {\em $\alpha_i(C_i),  i=1,\dots, m$ are open in $F$}  is equivalent to the condition 
that  $\alpha$ is an open map.
\end{remark}
\noindent Indeed, let $   i \in \{1,\dots, m\}$. There exist an open subgroup
$K_i$ of $G$,  $t_i \in G   $, $s_i \in F$, and a  continuous homomorphism or
anti-homomorphism $\theta_i: K_i \rightarrow F$  such that $C_i=t_i^{-1}K_i$  and
$\alpha_i(t)=s_i\theta_i(t_it)$, for all $t \in C_i$. Since
$\alpha_i(C_i)$   is  open in $F$, the subgroup $\theta_i(K_i)$ is also open in $F$ and
it follows  by \cite[Lemma 3.4]{aek} that $\theta_i$  is an open map and hence $\alpha_i$ is an open map.

Therefore if $V\subseteq Y$ is an open set, then for each $i$ the set $\ga(V\cap Y_i)$ is open, since 
it equals  $\ga_i(V\cap Y_i)$; hence $\ga(V)=\cup_i\ga(V\cap Y_i)$ is open. \qed

\begin{theorem}\label{5555} 
Let $G$ and $F$ be locally compact second countable groups and $Y\subseteq G$ be a  clopen set. Let 
$\alpha: Y\rightarrow F$ be a mixed piecewise affine map 
(Definition \ref{1111}). The following are equivalent:
 \begin{enumerate}
\item The map $\phi_\ga:A(F)\to B(G)$ extends to a  bounded $w^*$-continuous homomorphism from 
$L^\infty (F)$ to $L^\infty (G)$. 

\item $\alpha $ is an open map.
\end{enumerate}
\end{theorem}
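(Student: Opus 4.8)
The plan is to derive Theorem \ref{5555} by combining Proposition \ref{pr21} with Theorem \ref{52} and Remark \ref{re}, so that essentially all the work has already been done and only a bookkeeping argument remains. Recall that Proposition \ref{pr21} says a bounded homomorphism $\phi_\ga: A(F)\to B(G)$ of the standard form $\phi_\ga(u) = \chi_Y\cdot(u\circ\ga)$ extends to a (necessarily $*$-homomorphic, $w^*$-continuous) map $L^\infty(F)\to L^\infty(G)$ \emph{if and only if} $\ga_*\mu_Y\ll\nu$, where $\mu_Y$ is the restriction of Haar measure to $Y$. So the statement to be proved is equivalent to: $\ga_*\mu_Y\ll\nu$ if and only if $\ga$ is an open map.

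First I would check that, since $\ga$ is a mixed piecewise affine map on a clopen set $Y$, the map $\phi_\ga$ it induces is indeed a bounded homomorphism of the form considered in Proposition \ref{pr21}. This needs a short remark: each affine or anti-affine piece $\ga_i: C_i\to F$ induces a bounded (indeed completely bounded) homomorphism, and piecing together finitely many such maps over the disjoint clopen sets $Y_i\in\Omega_0(G)$ yields a bounded homomorphism $A(F)\to B(G)$ — this is standard (Ilie--Spronk, or can be seen directly from the fact that $\chi_{Y_i}\in B(G)$ since $Y_i$ lies in the open coset ring). One should also confirm that the set $Y$ and the map $\ga$ arising in the decomposition of Proposition \ref{pr21} agree with the given data, up to the harmless ambiguity on the null-set where $\phi_\ga$ vanishes; replacing $Y$ by a possibly smaller clopen set changes nothing in the absolute-continuity statement.

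Then the equivalence is immediate: by Proposition \ref{pr21}, condition (1) holds if and only if $\ga_*\mu_Y\ll\nu$; by Theorem \ref{52}, $\ga_*\mu_Y\ll\nu$ holds if and only if each $\ga_i(C_i)$ is open in $F$; and by Remark \ref{re}, the latter is equivalent to $\ga$ being an open map. Chaining these three equivalences gives (1) $\Leftrightarrow$ (2). I would write the proof in exactly this order, citing Proposition \ref{pr21}, then Theorem \ref{52}, then Remark \ref{re}.

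The main obstacle — really the only non-cosmetic point — is verifying that the hypotheses of Proposition \ref{pr21} genuinely apply: namely that a mixed piecewise affine $\ga$ produces a \emph{bounded} homomorphism $\phi_\ga$ into $B(G)$ (as opposed to just a well-defined linear map into functions on $G$), and that the canonical $(Y,\ga)$ extracted from $\phi_\ga$ as in the discussion before Proposition \ref{pr21} may be identified with the given $(Y,\ga)$. Once boundedness of $\phi_\ga$ is in hand, every implication is a direct appeal to a previously established result, so I expect the written proof to be only a few lines.
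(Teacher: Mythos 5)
Your proposal is correct and is essentially the paper's own proof: the paper likewise chains Proposition \ref{pr21}, Theorem \ref{52} and Remark \ref{re}, differing only in that it applies Proposition \ref{pr21} to the restrictions $\chi_{Y_i}\phi_\ga$ rather than to $\phi_\ga$ directly, and in the direction $(2)\Rightarrow(1)$ re-derives inline (rather than citing Remark \ref{re} for) the implication that $\alpha$ open forces each $\alpha_i(C_i)$ to be open --- the direction of that Remark whose printed justification is actually the converse one. Your preliminary checks (that $\phi_\ga$ is a bounded homomorphism into $B(G)$ and that the canonical pair $(Y,\alpha)$ attached to it coincides with the given data) are exactly the points the paper leaves implicit, and your sketch of them is adequate.
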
 

\begin{proof} 

\noindent{\it (1) implies (2): }
Suppose that  the map $\phi_\alpha$  extends to a bounded $w^*$-continuous 
homomorphism from $L^\infty (F)$ to $L^\infty (G)$. For $i=1,\dots,m$ and $u\in A(F)$ let 
$$ 
\phi _i(u)(t)=\left\{\begin{array}{ll} \phi_\ga(u) (t), & t\in Y_i\\  
0, & t\in G\setminus Y_i\end{array}\right.
$$
(note that  $\phi _i$ maps $A(F)$ to $B(G)$ since the characteristic function of $Y_i$ is in $B(G)$). 
Clearly $\phi_i$ extends to a map 
from $L^\infty (F)$ to $L^\infty (G)$ and hence, by Proposition \ref{pr21},
${\alpha_i}_*(\mu_{Y_i})\ll \nu$.

Therefore, from Theorem \ref{52} and Remark \ref{re},
it follows that $\alpha$ is an open map. \smallskip

\noindent{\it (2) implies (1):}
Conversely, suppose that $\alpha$ is an open map. Then
$\alpha_*\mu_Y\ll \nu$. 

Indeed the restriction of $\ga$ to each $Y_i$ coincides with $\ga_i|_{Y_i}$.
Thus $\ga_i|_{Y_i}$ is an open map. 
But $\ga_i$ is affine or anti-affine, hence $\ga_i(x)=s\theta(tx)$ for some $s,t$, where $tC_i$ is a subgroup 
of $G$ and $\theta:tC_i\to F$ is a continuous
homomorphism (or anti-homomorphism). When $\theta$ is a homomorphism,  
$\ga_i(Y_i)=s\theta(tY_i)$ so $\theta(tY_i)$ must be open in $F$;  thus 
$\theta(tC_i)$ is a group containing a non empty open set, hence it must be open 
and so $\ga(C_i)=s\theta(tC_i)$ is open. Thus
the claim follows by Theorem \ref{52}. 
The case of an anti-homomorphism is similar.  
 
Therefore the map
$$\phi _0: L^\infty (F, \nu )\rightarrow L^\infty (Y, \mu_Y ): \;\;\;\phi _0(u)=u\circ \alpha $$
is a w*-continuous *-homomorphism. The space  $L^\infty (Y, \mu_Y )$
is a direct summand of $L^\infty (G, \mu )$ and the injection
$\iota: L^\infty (Y, \mu_Y )\rightarrow L^\infty (G, \mu )$ is $w^*$-continuous.
Hence $\tilde\phi:=\iota\circ \phi_0$  is $w^*$-continuous and clearly extends $\phi_\alpha$. 
 \end{proof}

A related result is proved by Ilie and Stokke  in \cite{IST}: if 
$Y\subseteq G$ is an open set, $\alpha: Y\rightarrow F$  a piecewise affine map and $\phi_\ga$ 
the corresponding map from $A(F)$ to $B(G)$
then $\phi_\ga$ extends to a  map $\tilde{\phi}: B(F) \rightarrow B(G)$ 
which is continuous for the respective w* topologies
if and only if $\alpha$ is an open map. 

\section{The dual problem for locally compact abelian groups} \label{3000}

Let $E$ and  $H$ be locally compact abelian groups and  $\go: E\to H$ a continuous homomorphism.
It follows from our results in Section 2 that the map $u\to u\circ\go: A(H)\to B(E)$ extends to a w*-continuous
 *-homomorphism $L^\infty(H)\to L^\infty(E)$ if and only if $\go$ is open. 
 
 In the next section, we will consider the dual problem for general locally compact groups: 
 under what conditions a homomorphism 
 $L^1(H)\to M(E)$ induces a  w*-continuous
 *-homomorphism $VN(H)\to VN(E)$. 
 
Proposition \ref{100} below examines what happens in the abelian group case. We include it  as motivation 
for our Theorem \ref{nrr} (of which of course it is a special case). It is a consequence of Theorem 
\ref{5555} and the  following  result: 

\begin{theorem}\cite[Theorem 8]{dr}\label{opr} 
Let $E$ and  $H$ be locally compact abelian groups and  $\go: E\to H$ a continuous homomorphism.
If $\hat E$ and $\hat H$ are the dual groups 
we denote by $\hat \go: \hat H\to\hat E$  the dual map 
(given by $\hat \go(\gg)=\gamma\circ\go$ for every $\gamma\in \hat H$). 

The following are equivalent:

(i) the map $\go: E\to H$ is open

(ii) the map $\hat \go: \hat H\to\hat E$ is  proper.
\end{theorem}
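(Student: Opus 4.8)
The plan is to reduce both conditions to a single condition on a canonical factorisation of $\go$, and then to play the two factorisations off against each other via Pontryagin duality. Write $N=\ker\go$ and $L=\overline{\go(E)}$ and factor $\go=\iota\circ\bar\go\circ q$, where $q:E\to E/N$ is the quotient map, $\bar\go:E/N\to L$ is the induced continuous injective homomorphism (which has dense range), and $\iota:L\hookrightarrow H$ is the inclusion of the closed subgroup $L$. Dualising, $\hat\go=\hat q\circ\widehat{\bar\go}\circ\hat\iota$, where $\hat\iota:\hat H\to\hat L$ is the restriction map --- a surjective open homomorphism with kernel $L^\perp$ realising $\hat L$ as $\hat H/L^\perp$ --- where $\widehat{\bar\go}:\hat L\to\widehat{E/N}$ is the dual of $\bar\go$, and where $\hat q:\widehat{E/N}\to\hat E$ is a closed embedding with image $N^\perp$. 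Note that $\widehat{\bar\go}$ is injective with dense range, because $\bar\go$ has dense range and is injective. All of this is standard from the duality theory of locally compact abelian groups.

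First I would record two elementary reductions. Since $q$ is an open surjection, since $\iota$ is open precisely when $L$ is open in $H$, and since an open subgroup is automatically closed, one checks that $\go$ is open if and only if $L$ is open in $H$ and $\bar\go:E/N\to L$ is a topological isomorphism (when this holds $\bar\go$ is surjective, so $\go(E)=L$ is itself open). On the dual side, $\ker\hat\go=\go(E)^\perp=L^\perp$, and $L^\perp\cong\widehat{H/L}$ is compact precisely when $H/L$ is discrete, i.e. when $L$ is open in $H$; moreover, when $L$ is open, $\hat\iota$ realises $\hat L$ as the quotient of $\hat H$ by the compact subgroup $L^\perp$ and is therefore a proper surjection, while $\hat q$ is always a closed embedding. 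A short chase through the composite $\hat\go=\hat q\circ\widehat{\bar\go}\circ\hat\iota$ --- cancelling the injective proper map $\hat q$ from the outside, and the proper surjection $\hat\iota$ from the inside (using that continuous images of compact sets are compact) --- then shows that $\hat\go$ is proper if and only if $L$ is open in $H$ and $\widehat{\bar\go}$ is proper.

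It remains to show that $\bar\go$ is a topological isomorphism if and only if $\widehat{\bar\go}$ is proper. The forward implication is immediate, since the dual of a topological isomorphism is a topological isomorphism, hence proper. For the converse, I would use that a proper continuous map between locally compact Hausdorff spaces is closed: a proper $\widehat{\bar\go}$ is therefore, being injective, a closed embedding, and since it also has dense range its image is all of $\widehat{E/N}$; so $\widehat{\bar\go}$ is bijective and hence a topological isomorphism. By biduality its dual is $\bar\go$ (modulo the canonical identifications $E/N\cong\widehat{\widehat{E/N}}$ and $L\cong\widehat{\widehat L}$), so $\bar\go$ is a topological isomorphism too. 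Chaining the three reductions gives: $\go$ open $\iff$ [$L$ open in $H$ and $\bar\go$ a topological isomorphism] $\iff$ [$L$ open in $H$ and $\widehat{\bar\go}$ proper] $\iff$ $\hat\go$ proper.

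I expect the main difficulty to lie in the second step --- making the passage of properness through the three-fold factorisation genuinely airtight, so that it is seen to be equivalent to the conjunction of "$L^\perp$ compact" and "$\widehat{\bar\go}$ proper" with nothing lost or spuriously added --- together with the careful marshalling of the standard ingredients it relies on: extension of characters from closed subgroups, the fact that the dual of a closed-subgroup inclusion is an open quotient map, the compact-versus-discrete correspondence under duality, and the functoriality and bidualisation properties of the Pontryagin dual.
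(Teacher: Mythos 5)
The paper does not actually prove this statement: Theorem \ref{opr} is quoted verbatim from Dunkl--Ramirez \cite[Theorem 8]{dr} and used as a black box, so there is no in-paper argument to compare yours against. Judged on its own, your proof is correct and self-contained. The canonical factorisation $\go=\iota\circ\bar\go\circ q$ and its dual $\hat\go=\hat q\circ\widehat{\bar\go}\circ\hat\iota$ are set up properly, and the three reductions all check out: (a) $\go$ is open iff $\go(E)$ is an open (hence closed, hence equal to $L$) subgroup and $\bar\go$ is a topological isomorphism onto $L$, using that $q$ is an open surjection; (b) the ``cancellation'' works because $\ker\hat\go=L^{\perp}\cong\widehat{H/L}$ is compact iff $L$ is open, because for compact $C\subseteq\widehat{E/N}$ one has $\widehat{\bar\go}^{-1}(C)=\hat\iota\bigl(\hat\go^{-1}(\hat q(C))\bigr)$ (injectivity of $\hat q$ and surjectivity of $\hat\iota$ -- the latter being the character-extension theorem), and because in the other direction $\hat\iota$ is proper as a quotient by a compact subgroup and $\hat q$ is proper as a closed embedding; (c) a proper injective continuous map between locally compact Hausdorff spaces is a closed embedding, which combined with the dense range of $\widehat{\bar\go}$ (coming from $\overline{\mathrm{ran}\,\widehat{\bar\go}}=(\ker\bar\go)^{\perp}$) and Pontryagin biduality gives that $\widehat{\bar\go}$ proper forces $\bar\go$ to be a topological isomorphism. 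The one step worth making fully explicit in a written version is exactly this last duality identity $\overline{\mathrm{ran}\,\hat f}=(\ker f)^{\perp}$, since the whole converse in step (c) hinges on it; with that reference supplied, the argument is airtight and is essentially the structure-theoretic route one would expect behind the Dunkl--Ramirez result.
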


Below if $\theta: F\to G$ is a measurable map, 
for  $f\in L^1(F)$ we denote by $\theta_*(f)\in M(G)$ the measure given by 
$\theta_*(f)(A)=\int_{\theta^{-1}(A)}f(t)dt, \ A\subseteq G$ Borel.  
\begin{proposition}\label{100} If $F$ and $G$ are locally compact abelian groups,
a homomorphism $\phi: L^1(F)\to M(G)$ of the form $f\mapsto \theta_*(f)$, 
where $\theta: F\to G$ is a continuous group homomorphism, induces
a   w*-continuous *-homomorphism $VN(F)\to VN(G)$ if and only if $\theta$ is proper.
\end{proposition}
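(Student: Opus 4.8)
The plan is to reduce Proposition~\ref{100} to the already-established Theorem~\ref{5555} by passing to Fourier transforms, using Theorem~\ref{opr} to translate the properness of $\theta$ into the openness of its dual map. Since $F$ and $G$ are locally compact abelian, Pontryagin duality gives isometric isomorphisms $L^1(F)\cong A(\hat F)$ and $M(G)\cong B(\hat G)$, and the Fourier transform carries $VN(F)$ onto the multiplication algebra of $L^\infty(\hat F)$ (and likewise for $G$). Moreover these identifications are weak*-weak* homeomorphisms, and the group von Neumann algebra $VN(F)$ is the weak* closure of $\lambda^F(L^1(F))$ inside $B(L^2(F))$, which under the Fourier transform becomes $L^\infty(\hat F)$ acting by multiplication on $L^2(\hat F)$. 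So a weak*-continuous $*$-homomorphism $VN(F)\to VN(G)$ corresponds exactly to a weak*-continuous $*$-homomorphism $L^\infty(\hat F)\to L^\infty(\hat G)$.

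First I would make precise the commuting diagram: the map $\phi:L^1(F)\to M(G)$, $f\mapsto\theta_*f$, corresponds under the Fourier transforms to the map $\phi_\alpha:A(\hat G)\to B(\hat F)$ given by $u\mapsto u\circ\hat\theta$, where $\hat\theta:\hat G\to\hat F$ is the dual homomorphism. This is the standard computation: for $\gamma\in\hat G$ one has $\widehat{\theta_*f}(\gamma)=\int_F \gamma(\theta(t))\,\overline{f(t)}\,dt$ (with the appropriate conventions), i.e. $\widehat{\theta_*f}=\hat f\circ\hat\theta$. Note the roles of $F$ and $G$ swap under duality: $\theta:F\to G$ induces $\hat\theta:\hat G\to\hat F$, so in the notation of Section~\ref{20000} the source group is $\hat G$ and the target is $\hat F$, and $\alpha=\hat\theta$, which is a (genuine, everywhere-defined) continuous group homomorphism, hence in particular a mixed piecewise affine map with $Y=\hat G$ clopen, $m=1$, $C_1=\hat G$, $\alpha_1=\hat\theta$.

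Next I would chain the equivalences. By Theorem~\ref{5555} applied to $\alpha=\hat\theta:\hat G\to\hat F$, the homomorphism $\phi_{\hat\theta}:A(\hat F)\to B(\hat G)$ --- wait, I must be careful about direction --- the homomorphism $A(\hat F)\to B(\hat G)$ of the form $u\mapsto u\circ\hat\theta$ extends to a bounded weak*-continuous homomorphism $L^\infty(\hat F)\to L^\infty(\hat G)$ if and only if $\hat\theta$ is an open map. By Theorem~\ref{opr}, $\hat\theta:\hat G\to\hat F$ is open if and only if its dual, which by Pontryagin reflexivity is (up to the canonical identifications) $\theta:F\to G$, is proper. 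Finally, transporting the extended map $L^\infty(\hat F)\to L^\infty(\hat G)$ back through the Fourier transform yields precisely a weak*-continuous $*$-homomorphism $VN(F)\to VN(G)$ extending $\phi$, and conversely any such extension restricts on $L^1(F)\cong A(\hat F)$ to $\phi_{\hat\theta}$ and transports to an extension of $\phi_{\hat\theta}$ to $L^\infty$. Combining the three equivalences gives the claim.

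The main point requiring care --- rather than a deep obstacle --- is bookkeeping the duality: which group plays the role of ``$F$'' and which of ``$G$'' in Theorem~\ref{5555} and Theorem~\ref{opr}, and checking that the weak* topologies match up correctly under the Fourier transform on all four algebras ($L^1$, $M$, $VN$, $L^\infty$), so that ``weak*-continuous $*$-homomorphism $VN\to VN$'' on one side is genuinely the same object as ``weak*-continuous homomorphism $L^\infty\to L^\infty$'' on the other. One should also note that a weak*-continuous homomorphism between these abelian von Neumann algebras is automatically a $*$-homomorphism (as in Proposition~\ref{pr21}), so there is no loss in the statement. Beyond this, everything is a routine invocation of the cited results, which is exactly why the proposition is presented as motivation for the general Theorem~\ref{nrr}.
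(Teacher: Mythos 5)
Your proposal is correct and follows essentially the same route as the paper: pass to the dual groups via the Fourier transform (identifying $VN(F)$ with $L^\infty(\hat F)$ and the map $f\mapsto\theta_*f$ with $u\mapsto u\circ\hat\theta$ on $A(\hat F)$), then apply Theorem \ref{5555} to get openness of $\hat\theta$ and Theorem \ref{opr} to convert that into properness of $\theta$. The paper merely makes the ``bookkeeping'' you describe explicit by checking the correspondence on the generators $\gl^F_x\leftrightarrow g^F_x$ and invoking w*-density of their span.
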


\begin{proof} 
Recall that, for an abelian group $F$,  the map $u\to \hat u$ (where $\hat u$ is the Fourier transform of $u$) 
is an isometric algebra isomorphism of $A(F)$ onto $L^1(\hat F)$ and
$$u(x)=\int_{\hat F}\hat u(\eta)\eta(x)d\eta$$ 
for all $x\in F$ (integration is understood with respect to suitably normalized Haar measure on $\hat F$).  
The dual map of the isomorphism  
$$L^1(\hat F)\to A(F): \hat u\mapsto u$$
is a w*-continuous isomorphism $\go^F:VN(F)\to L^\infty(\hat F)$ satisfying, for each $x\in F$,
$$ VN(F)\ni   \gl^F_x \mapsto g^F_x \in L^\infty(\hat F)$$ 
where $$g^F_x(\eta ) = \eta(x), \quad \eta\in\hat F.$$
Assume that $\theta$ is a proper homomorphism. 
Then $\hat\theta: \hat G\to \hat F$ is open, and so  by Theorem \ref{5555} the map 
$$\hat\phi: A(\hat F)\to L^\infty (\hat G):\hat\phi(u)=u\circ \hat \theta$$ extends to 
a w*-continuous map  $L^\infty (\hat F)\to L^\infty(\hat G)$ denoted by the same symbol. 
Consider the composition 
$$\psi:VN(F)\stackrel{\go^F}{\rightarrow} L^\infty(\hat F)\stackrel{\hat\phi}{\rightarrow} L^\infty(\hat G)
\stackrel{(\go^G)^{-1}}{\rightarrow} VN(G).$$ 
For each $x\in F$, this gives 
$$\psi:\gl^F_x\to g^F_x\to g^F_x\circ\hat\theta\to\gl^G_{\theta(x)}$$ 
because $g^F_x\circ\hat\theta= g^G_{\theta(x)}$;
indeed, for each $\gg \in \hat G$,
$$g_{\theta(x)}^G(\gg)=\gg(\theta(x))=g_x^F(\gg\circ \theta)=(g_x^F\!\circ\hat \theta)(\gg).$$
Thus  $$\psi(\gl^F_x)=\gl^G_{\theta(x)}$$ for all $x\in F$, and therefore, since $\psi$ is w*-continuous, 
$$\psi(\gl^F(\mu))=\gl^G(\theta_*(\mu))$$  for all $\mu\in M(F)$.
This shows that the map $$\theta_*:L^1(F)\to M(G): f\mapsto  \theta_*(f), $$
which  extends to a map  from $M(F)$ to $M(G)$,  
induces the map $\psi$, in the sense that $\psi\circ\gl^F=\gl^G\circ\theta_*$.
\smallskip
 
Conversely, assume that the map $ \theta_*: L^1(F)\to M(G)$, 
where $\theta: F\to G$ is a continuous group homomorphism, induces a $w^*$-continuous homomorphism 
$$\psi: VN(F)\to VN(G) \;\;\text{such that }\; \gl_x\mapsto \gl_{\theta(x)}  \quad  \text{for }\; x\in F.$$ 
Then the composition 
$$\hat\phi:L^\infty(\hat F)\stackrel{(\go^F)^{-1}}{\rightarrow}VN(F) \stackrel{\psi}{\rightarrow} VN(G)
\stackrel{\go^F}{\rightarrow} L^\infty(\hat G)$$ 
is a  $w^*$-continuous homomorphism which satisfies, for each $x\in F$,  
$$\hat\phi:g^F_x\to\gl^F_x\to\gl^G_{\theta(x)}\to   g^F_x\circ\hat\theta\, .$$ 
Since the w*-closed linear span of $\{g^F_x: x\in F\}$ is $L^\infty(\hat F)$, it follows
that $$\hat\phi(f)=f\circ \hat \theta$$  for all $f\in L^\infty(\hat F)$. 
Since $\hat \theta : \hat G\to \hat F$ is a continuous homomorphism we have that the restriction of 
$\hat\phi$ sends $A(\hat F)$ to $B(\hat G).$ Applying Theorem \ref{5555}, we see that the homomorphism
$$\hat \theta: \hat G\to\hat F$$ 
is open and therefore, by Theorem \ref{opr}, $\theta$ is proper.
\end{proof} 

We proved that if $F$ and $G$  are   abelian locally compact groups 
and  $\phi: L^1(F)\to M(G)$ is a homomorphism
of the form $\phi(f)=\theta_*(f)$ with $\theta$ a continuous homomorphism $F\rightarrow G$, 
then $\phi$  induces a   w*-continuous *-homomorphism $\psi: VN(F)\to VN(G)$   
if and only if $\theta$ is proper. This result suggests that a similar condition should hold for not necessarily 
abelian groups $F$ and $G$. Greenleaf \cite{green} has shown that the  form of a 
contractive homomorphism $\phi: L^1(F)\to M(G)$ is  more general than the 
 one we consider in Proposition \ref{100}. 
Nevertheless,  we prove in the 
next section that in the general case there still is  a properness condition which is 
necessary and sufficient for the existence of the induced map $\psi$.

 %%%%%%%%%

\section{The dual problem for arbitrary locally compact groups}\label{4}

In this section we investigate conditions on a homomorphism $\phi:L^1(F)\to M(G)$ under which 
the corresponding homomorphism between the convolution algebras induces a w* continuous 
 homomorphism  $\psi:VN(F)\to VN(G)$.  
 
 We begin with some preliminaries.

Let $G$ be a locally compact group. The space $M(G)$ of complex measures on $G$ 
is the dual of $C_0(G)$; the duality is given by 
$$\dua{\mu,f}=\int_Gfd\mu,\quad f\in C_0(G),\,\mu\in M(G).$$
The space $M(G)$ is a Banach algebra under the following multiplication
$$\dua{\mu\ast \nu,f}=\int_G\int_Gf(ts)d\mu(t)d\nu(s),\quad f\in C_0(G).$$
We say that the net $(\mu_i)\subseteq M(G)$ converges in the $so$ topology to $\mu \in M(G)$ if  
$$\nor{\mu_i\ast f- \mu\ast f}_{L^1}\to 0  \quad \text{for all }\,f\in L^1(G). $$

We will need the following simple facts:

\begin{lemma}\label{di}
 The map $\gl: M(G)\to VN(G)$ is the dual of the inclusion $\iota:A(G)\to C_0(G)$.
\end{lemma}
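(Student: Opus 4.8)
The plan is to unwind the definitions and identify the relevant duality. Recall that $VN(G)$ is the von Neumann algebra generated by $\{\lambda_g^G : g\in G\}$ acting on $L^2(G)$, and that as a Banach space it is the dual of the Fourier algebra $A(G)$, with the pairing determined by $\dua{\lambda_g, u} = u(g)$ for $u\in A(G)$ and $g\in G$. On the other hand, $M(G)$ is the dual of $C_0(G)$, and the inclusion $\iota: A(G)\to C_0(G)$ is a bounded linear map (indeed a contraction, since elements of $A(G)$ lie in $C_0(G)$ with $\|u\|_\infty\le\|u\|_{A(G)}$). Its Banach space adjoint $\iota^*: M(G)\to A(G)^* = VN(G)$ is therefore a well-defined bounded map, and the claim is that $\iota^* = \lambda$, where $\lambda: M(G)\to VN(G)$ is the integrated form $\mu\mapsto \lambda^G(\mu)$.

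First I would fix the pairing conventions precisely: for $\mu\in M(G)$ and $u\in A(G)$ one has $\dua{\iota^*\mu, u} = \dua{\mu, \iota(u)} = \int_G u\, d\mu$. So I need to verify that $\dua{\lambda(\mu), u} = \int_G u\,d\mu$ as well, for all $u\in A(G)$. By definition of the integrated form, $\lambda(\mu) = \int_G \lambda_t\, d\mu(t)$ in the appropriate (weak operator, or weak$^*$) sense, so for $u\in A(G)$, using the $A(G)$–$VN(G)$ duality and linearity/weak$^*$-continuity of the pairing,
\[
\dua{\lambda(\mu), u} = \Dua{\int_G \lambda_t\, d\mu(t),\, u} = \int_G \dua{\lambda_t, u}\, d\mu(t) = \int_G u(t)\, d\mu(t).
\]
This is exactly $\dua{\iota^*\mu, u}$, and since $A(G)$ separates points of $VN(G)$ this gives $\lambda(\mu) = \iota^*\mu$. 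The interchange of the integral and the duality pairing is the only point requiring a word of justification: for $\mu$ a finite measure and $u\in A(G)\subseteq C_0(G)$ bounded and continuous, the map $t\mapsto \lambda_t$ is weak$^*$-continuous (equivalently, continuous into $VN(G)$ with respect to the weak operator topology) and bounded in norm by $1$, so the vector-valued integral defining $\lambda(\mu)$ exists in the weak$^*$ sense and commutes with evaluation against the fixed functional $u\in A(G)$; alternatively one reduces to elementary coefficient functions $u(t)=\dua{\lambda_t\xi,\eta}$ with $\xi,\eta\in L^2(G)$, for which the identity $\dua{\lambda(\mu)\xi,\eta}=\int_G\dua{\lambda_t\xi,\eta}\,d\mu(t)$ is the very definition of $\lambda(\mu)$, and then extends to all of $A(G)$ by density and boundedness.

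I do not expect a serious obstacle here; the lemma is essentially a bookkeeping statement matching two standard constructions, and the proof is a one-line computation once the pairings are written down. The mild subtlety worth flagging is simply making sure that the $A(G)$–$VN(G)$ pairing is normalised so that $\dua{\lambda_t,u}=u(t)$ (rather than $u(t^{-1})$ or a conjugate), which is the convention fixed by Eymard and used throughout; with that convention in place the identification $\lambda=\iota^*$ is immediate.
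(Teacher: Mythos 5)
Your proposal is correct and follows essentially the same route as the paper: the paper also verifies $\dua{\gl(\mu),u}=\int_G u\,d\mu=\dua{\mu,\iota(u)}$ by writing $u$ as a coefficient functional $\dua{T,u}=\sca{T\xi,\eta}$, for which $\dua{\gl(\mu),u}=\int_G\sca{\gl_t\xi,\eta}\,d\mu(t)$ is just the definition of the integrated form. The only cosmetic difference is that no density argument is needed at the end, since every element of $A(G)$ is already a single coefficient function of the left regular representation.
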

 
\begin{proof}  An element $u\in A(G)$ is a vector functional on $VN(G)$, i.e. an element of the predual
$(VN(G))_*$ given by 
$\dua{T, u}= \sca{T\xi,\eta}$ for some $\xi, \eta\in L^2(G)$. In particular 
$$\dua{\gl(\mu),u}=\sca{\gl(\mu)\xi,\eta} = \int_G\sca{\gl_t\xi ,\eta }d\mu(t).$$
But the functional $u$ defines a function $\iota(u)\in C_0(G)$, by 
$\iota(u)(s):=\dua{\gl_s,u}= \sca{\gl_s\xi,\eta}$ (usually one denotes the function $\iota(u)$ by $u$,
identifying the functional with the function). We have 
\begin{align*} 
\dua{\mu,\iota(u)}= \int \iota(u)(t)d\mu(t)=\int_G\sca{\gl_t\xi ,\eta }d\mu(t)=\dua{\gl(\mu),u}\, .
\end{align*}
This relation holds for all $u\in A(G)$ and all $\mu\in M(G)$ and shows that $\gl=\iota^*$. 
\end{proof}

\begin{proposition}\label{z} Let $F$ be a locally compact group. Then
the maps 
$$\gl: (M(F), w^*)\to (VN(F), w^*) \quad \text{and }\quad \gl: (M(F), so)\to (VN(F), w^*)\,$$  are  continuous.
\end{proposition}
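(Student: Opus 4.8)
The plan is to prove both continuity statements simultaneously by exhibiting $\gl: M(F)\to VN(F)$ as the adjoint of a bounded linear map, and then showing the $so$ topology is finer than the relevant weak topology on $M(F)$. By Lemma \ref{di}, $\gl=\iota^*$ where $\iota: A(F)\to C_0(F)$ is the inclusion. Since $A(F)=(VN(F))_*$ and $C_0(F)^*=M(F)$, the map $\gl=\iota^*: M(F)\to VN(F)$ is automatically continuous when both spaces carry their weak* topologies (the weak* topology on $M(F)=C_0(F)^*$ and the weak* topology on $VN(F)=A(F)^*$): this is the general fact that a Banach-space adjoint is weak*-weak* continuous. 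This disposes of the first assertion.

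For the second assertion I would show that $so$-convergence of a net $(\mu_i)$ to $\mu$ in $M(F)$ implies weak*-convergence in $M(F)$, i.e. that $\dua{\mu_i,f}\to\dua{\mu,f}$ for every $f\in C_0(F)$; the first part then gives weak*-convergence of $\gl(\mu_i)$ to $\gl(\mu)$ in $VN(F)$. Equivalently, and more directly, I would verify that for each $u\in A(F)$ the functional $\nu\mapsto\dua{\gl(\nu),u}$ is $so$-continuous. Writing $u$ as a coefficient $u(t)=\sca{\gl_t\xi,\eta}$ with $\xi,\eta\in L^2(F)$, we have $\dua{\gl(\nu),u}=\sca{\gl(\nu)\xi,\eta}$. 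The key computation is to rewrite this pairing in terms of the convolution action $\nu\ast g$ on $L^1(F)$: one expresses $\sca{\gl(\nu)\xi,\eta}$ (for $\xi,\eta$ in a suitable dense class, e.g. $\xi\in C_c(F)\cap L^2$, $\eta\in C_c(F)\cap L^2$) as an integral involving $\nu\ast|\xi|^2$ or, after polarization, as $\dua{\nu\ast g,h}$ for some $g\in L^1(F)$, $h\in L^\infty(F)$ with $h$ depending only on $\eta$. Then $so$-convergence $\nor{\mu_i\ast g-\mu\ast g}_{L^1}\to 0$ forces $\dua{\gl(\mu_i),u}\to\dua{\gl(\mu),u}$. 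A density/uniform-boundedness argument (the net $(\mu_i)$ is norm-bounded, since $\mu_i\ast f\to\mu\ast f$ in $L^1$ for all $f$ together with the uniform boundedness principle, applied to the operators of convolution, bounds $\nor{\mu_i}$) then extends this from the dense class of coefficient functions to all of $A(F)$.

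The step I expect to be the main obstacle is the explicit identification of the sesquilinear form $(\xi,\eta)\mapsto\sca{\gl(\nu)\xi,\eta}$ with a pairing of the form $\dua{\nu\ast g,h}$ in a way that is uniform in $\nu$ and uses only $\nor{\nu\ast g}_{L^1}$; one must be careful about which variable carries the $L^1$ function and which carries the $L^\infty$ function, and about the modular function if one is not working with left convolution throughout. It is cleanest to first reduce to $\xi=\eta$ by polarization, note $\sca{\gl(\nu)\xi,\xi}=\int_F\sca{\gl_t\xi,\xi}\,d\nu(t)$ with $t\mapsto\sca{\gl_t\xi,\xi}=(\tilde\xi\ast\xi^*)(t)$-type expression lying in $A(F)\subseteq C_0(F)$, and then recognize $\int_F(\nu\text{-convolution of an }L^1\text{ function})\cdot(\text{bounded function})$. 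Once this bookkeeping is done the continuity is immediate. Alternatively, one avoids it entirely by the soft argument: $so$-convergence implies norm-boundedness of $(\mu_i)$ and pointwise convergence $\dua{\mu_i,f}\to\dua{\mu,f}$ on the dense subspace $A(F)\ast L^1(F)\subseteq C_0(F)$ (using $\dua{\mu_i, f\ast g}=\dua{\mu_i\ast g, f}$ and $\nor{\mu_i\ast g-\mu\ast g}_1\to0$), hence weak* convergence in $M(F)$, hence by the first part weak* convergence in $VN(F)$.
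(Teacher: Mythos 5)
Your route is essentially the paper's: the first assertion is exactly the observation that $\gl=\iota^*$ (Lemma \ref{di}) is an adjoint and hence $w^*$-$w^*$ continuous, and the second is obtained, as in the paper, by factoring through the $so$-to-$w^*$ continuity of the identity map on $M(F)$ and then applying the first part. The only divergence is that the paper simply cites \cite[Proposition 2.2]{sto2011} for that intermediate fact, whereas you sketch a proof; your ``soft'' version (pairing $\dua{\mu_i\ast g,f}=\dua{\mu_i,h}$ with $h$ ranging over a dense subspace of $C_0(F)$) is the standard argument and is correct for \emph{bounded} nets. One caveat: deducing norm-boundedness of $(\mu_i)$ from $so$-convergence via the uniform boundedness principle works for sequences but not for nets (a convergent net need not be bounded), so in a self-contained write-up you should either restrict to bounded nets --- which is all that is ever used later, e.g.\ in the density argument concluding Theorem \ref{400} --- or simply quote the reference as the paper does.
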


\begin{proof} 
The first assertion  follows from Lemma  \ref{di}.

For the second assertion, since the identity map $\iota: M(F)\to M(F)$ is $so\text{-}w^*$-continuous 
\cite[Proposition 2.2]{sto2011}, the composition $\gl=\gl\circ\iota: M(F)\to VN(F)$ is $so\text{-}w^*$-continuous.
\end{proof}

The proof of the following Proposition is contained in \cite[Corollary 3.11]{IS}.  
\begin{proposition}\label{xxxxxx} Let $F$ and $G$  be locally compact groups, $\theta: F\rightarrow G$ 
a continuous homomorphism and 
$\rho(u)=u\circ\theta$ for every $u\in A(G).$ Then: $\theta$ is proper if and only if   $\rho(A(G))\subseteq A(F).$
\end{proposition}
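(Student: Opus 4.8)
\textbf{Proof proposal for Proposition \ref{xxxxxx}.}

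The plan is to prove both implications by relating the condition $\rho(A(G))\subseteq A(F)$ to the behaviour of $\theta$ on supports, and ultimately to properness, via a closed-graph/automatic-continuity argument together with an explicit computation on coefficient functions. First I would dispose of the easy direction: assume $\theta$ is proper. Then for a compactly supported $u\in A(G)$ (these are dense in $A(G)$), the function $u\circ\theta$ has support contained in $\theta^{-1}(\supp u)$, which is compact by properness; moreover $u\circ\theta$ is continuous and positive-definite-combination if $u$ is, so $u\circ\theta\in B(F)$ with compact support, hence lies in $A(F)$ (a compactly supported element of $B(F)$ belongs to $A(F)$). Since $\rho:A(G)\to B(F)$ is a contraction and the compactly supported $u$ are dense in $A(G)$, and $A(F)$ is closed in $B(F)$, we get $\rho(A(G))\subseteq A(F)$.

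For the converse, suppose $\rho(A(G))\subseteq A(F)$. First I would check that $\rho:A(G)\to A(F)$ is then automatically bounded: its graph is closed because convergence in $A(G)$ (resp. $A(F)$) implies pointwise convergence, and $u_n\to u$ in $A(G)$ with $u_n\circ\theta\to v$ in $A(F)$ forces $v=u\circ\theta$ pointwise. So $\rho$ is a bounded algebra homomorphism $A(G)\to A(F)$, hence its adjoint $\rho^*:VN(F)\to VN(G)$ is a $w^*$-continuous map. Identifying $\lambda^G_s$ and $\lambda^F_t$ inside the von Neumann algebras, one computes from the coefficient-function description that $\rho^*(\lambda^F_t)=\lambda^G_{\theta(t)}$: indeed for $u\in A(G)$, $\dua{\rho^*(\lambda^F_t),u}=\dua{\lambda^F_t,\rho(u)}=(u\circ\theta)(t)=u(\theta(t))=\dua{\lambda^G_{\theta(t)},u}$. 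The key structural input is then that properness of $\theta$ is equivalent to preservation of compact supports at the level of $A(G)$; to extract this I would argue by contradiction: if $\theta$ is not proper there is a compact $L\subseteq G$ with $\theta^{-1}(L)$ non-compact, so one can find a net $t_i\in\theta^{-1}(L)$ escaping every compact subset of $F$ while $\theta(t_i)$ stays in $L$; then $\lambda^F_{t_i}\to 0$ in the $w^*$-topology of $VN(F)$ (a net of left translations by points escaping to infinity tends weakly to $0$), whereas $\rho^*(\lambda^F_{t_i})=\lambda^G_{\theta(t_i)}$ has a subnet converging to $\lambda^G_{s}$ for some $s\in L$, which is non-zero — contradicting $w^*$-continuity of $\rho^*$.

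The main obstacle I anticipate is the last step: carefully justifying that $\lambda^F_{t_i}\to 0$ weakly when $(t_i)$ leaves every compact set, and dually that $\lambda^G_{\theta(t_i)}$ cannot go to $0$. The first fact is standard for the regular representation (test against $\dua{\lambda^F_{t_i}\xi,\eta}=\int\xi(t_i^{-1}s)\overline{\eta(s)}\,ds$ with $\xi,\eta\in C_c(F)$; for $t_i$ eventually outside $(\supp\eta)(\supp\xi)^{-1}$ this vanishes, and $C_c$ is dense), but one must be slightly careful with nets versus sequences — in the second countable case one can work with sequences, in general one passes to subnets and uses that $\theta(t_i)$ lies in the compact set $L$ so has a convergent subnet $\theta(t_{i_j})\to s$, giving $\lambda^G_{\theta(t_{i_j})}\to\lambda^G_s$ in the strong operator topology hence $w^*$, with $\lambda^G_s$ a unitary, so non-zero. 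Since the paper already cites this Proposition as "contained in \cite[Corollary 3.11]{IS}", I would if needed simply invoke that reference for the equivalence, but the self-contained argument above via $w^*$-continuity of the adjoint is short enough to include.
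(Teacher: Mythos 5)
Your proof is correct. Note that the paper itself gives no argument for this proposition; it simply defers to \cite[Corollary 3.11]{IS}, so your write-up is a self-contained substitute rather than a variant of an in-paper proof. Both of your directions rest on the right facts: for the forward implication, that $u\mapsto u\circ\theta$ is a contraction $B(G)\to B(F)$, that $\supp(u\circ\theta)\subseteq\theta^{-1}(\supp u)$, that compactly supported elements of $B(F)$ lie in $A(F)$, and that $A(G)\cap C_c(G)$ is dense in $A(G)$ while $A(F)$ is closed in $B(F)$; for the converse, that a closed non-compact $\theta^{-1}(L)$ admits a net escaping every compact set while its image stays in the compact set $L$.

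One remark on economy: the closed-graph/adjoint detour in the converse is sound but unnecessary. Once you know $\rho(u)=u\circ\theta\in A(F)\subseteq C_0(F)$ for every $u\in A(G)$, you can conclude directly: with $t_{i}\in\theta^{-1}(L)$ escaping every compact set and a subnet $\theta(t_{i_j})\to s\in L$, pick $u\in A(G)$ with $u(s)=1$; then $(u\circ\theta)(t_{i_j})\to u(s)=1$ by continuity, while $(u\circ\theta)(t_{i_j})\to 0$ because $u\circ\theta\in C_0(F)$ and $t_{i_j}\to\infty$ --- a contradiction with no appeal to boundedness of $\rho$ or to $w^*$-continuity of $\rho^*$. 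This streamlined version is essentially the same device the paper uses later in the proof of Theorem \ref{nrr}, $(iii)\Rightarrow(ii)$, where non-properness is contradicted by the fact that certain predual images land in $C_0(F)$. Either way, your argument establishes the proposition.
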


\begin{proposition}\cite[Proposition 2.2]{sto2011}
Any  bounded homomorphism $\varphi_0:L^1(F)\to M(G)$ has a unique $so$-$w^*$-continuous 
extension to a homomorphism $\varphi: M(F)\rightarrow M(G)$.
\end{proposition}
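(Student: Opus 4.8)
The statement to prove is that any bounded homomorphism $\varphi_0\colon L^1(F)\to M(G)$ extends uniquely to an $so$-$w^*$-continuous homomorphism $\varphi\colon M(F)\to M(G)$. The plan is to use the existence of a bounded approximate identity $(e_i)$ in $L^1(F)$ and the fact that $\varphi_0$ is bounded, together with the $so$-$w^*$ continuity properties collected above, to define $\varphi$ on a general measure by a limiting procedure. Concretely, for $\mu\in M(F)$ the net $(\mu\ast e_i)$ lies in $L^1(F)$, is bounded in norm by $\nor{\mu}\sup_i\nor{e_i}_{L^1}$, and converges to $\mu$ in the $so$ topology; one then wants to \emph{define} $\varphi(\mu)$ as a $w^*$-limit of $\varphi_0(\mu\ast e_i)$ in $M(G)$ — noting that the net $\varphi_0(\mu\ast e_i)$ is norm-bounded, hence has $w^*$-cluster points by Banach--Alaoglu.

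\medskip

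The first step I would carry out is to show this limit exists and is independent of the approximate identity. For this, observe that if $(f_j)$ is any net in $L^1(F)$ converging $so$ to some $f\in L^1(F)$ (in particular for $f_j=\mu\ast e_j$ with $\mu\in L^1(F)$, where $f=\mu$), then $\varphi_0(f_j)\to\varphi_0(f)$ in an appropriate sense: indeed, for $g\in L^1(G)$ one has $\varphi_0(f_j)\ast g=\varphi_0(f_j\ast h)\ast k$-type manipulations are not available directly, so instead the cleaner route is to test against $C_0(G)$ and use that $\varphi_0$ is a homomorphism: $\dua{\varphi_0(f_j\ast f'),F}$ for $f'\in L^1(F)$ equals $\dua{\varphi_0(f_j)\ast\varphi_0(f'),F}$, and the right side is continuous in the relevant topology because $\varphi_0(f_j)$ is norm-bounded and $\varphi_0(f')$ is fixed. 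Pinning down the exact pairing argument that makes $w^*$-cluster points unique is, I expect, the main obstacle: one must leverage that $L^1(F)\ast L^1(F)$ (or $L^1(F)\ast e_i$) is $w^*$-dense in $M(F)$ in a way compatible with $so$ convergence, and that $M(G)$-multiplication is separately $w^*$-continuous on bounded sets against elements coming from $L^1(G)$.

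\medskip

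Once $\varphi$ is well-defined on all of $M(F)$, the remaining steps are routine: (i) $\varphi$ is linear and bounded with $\nor{\varphi}\le\nor{\varphi_0}\sup_i\nor{e_i}_{L^1}$, since each $\varphi_0(\mu\ast e_i)$ obeys that bound and the $w^*$ topology makes the norm lower semicontinuous; (ii) $\varphi$ extends $\varphi_0$, because for $\mu=f\in L^1(F)$ we have $f\ast e_i\to f$ in $L^1$-norm, hence $\varphi_0(f\ast e_i)\to\varphi_0(f)$ in norm, a fortiori $w^*$; (iii) $\varphi$ is multiplicative, using bilinearity of convolution, the homomorphism property of $\varphi_0$ on $L^1(F)\ast L^1(F)$, separate $w^*$-continuity of multiplication in $M(G)$ on bounded sets, and an approximation of $\mu\ast\nu$ by $(\mu\ast e_i)\ast(\nu\ast e_j)$; (iv) $\varphi$ is $so$-$w^*$-continuous, which follows by a similar limiting argument: if $\mu_\alpha\to\mu$ in $so$ then $\mu_\alpha$ is norm-bounded (a standard consequence of $so$ convergence plus uniform boundedness) and $\varphi(\mu_\alpha\ast e_i)$ can be compared with $\varphi(\mu_\alpha)$ uniformly in $\alpha$; finally (v) uniqueness follows because $L^1(F)$ is $so$-dense in $M(F)$ (again $\mu\ast e_i\to\mu$) and any two $so$-$w^*$-continuous extensions agreeing on $L^1(F)$ must agree. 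I would present (iii) and (iv) with care, as they are where the interplay of the two topologies is most delicate, but none of these should present a genuine difficulty beyond bookkeeping.
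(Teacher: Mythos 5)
First, a point of reference: the paper does not prove this proposition at all --- it is quoted verbatim from Stokke \cite[Proposition 2.2]{sto2011} and used as a black box. So there is no in-paper proof to compare against, and your attempt has to be judged on its own.

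Your overall plan (define $\varphi(\mu)$ as a $w^*$-limit of $\varphi_0(\mu\ast e_i)$ for a bounded approximate identity $(e_i)$, then check linearity, multiplicativity, continuity and uniqueness) is the standard and correct strategy, and your steps (ii) and (v) are fine. But the proposal has a genuine gap exactly where you yourself flag ``the main obstacle'': you never establish that the bounded net $\varphi_0(\mu\ast e_i)$ has a \emph{unique} $w^*$-cluster point, and the route you hint at ($w^*$-density of $L^1(F)\ast L^1(F)$ in $M(F)$) does not address this --- the problem lives in $M(G)$, not in $M(F)$. The standard repair is: (a) show that $(\varphi_0(e_i))$ itself $w^*$-converges to an idempotent $E\in M(G)$ satisfying $E\ast\varphi_0(f)=\varphi_0(f)\ast E=\varphi_0(f)$ for all $f\in L^1(F)$ (compare distinct cluster points $E_1,E_2$ via $E_1\ast E_2=E_1=E_2$, using that convolution on $M(G)$ is \emph{separately} $w^*$-continuous with one factor fixed --- no boundedness hypothesis is needed for that, contrary to your parenthetical); (b) show that any cluster point $\nu$ of $(\varphi_0(\mu\ast e_i))$ satisfies both $\nu\ast\varphi_0(f)=\varphi_0(\mu\ast f)$ and $\nu\ast E=\nu$; (c) conclude $\nu=\nu\ast E=w^*\text{-}\lim_\beta\nu\ast\varphi_0(e_\beta)=w^*\text{-}\lim_\beta\varphi_0(\mu\ast e_\beta)$, so the cluster point is unique and the whole net converges. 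Without some version of (a)--(c) the map $\varphi$ is not even well defined, and everything downstream (multiplicativity, continuity) collapses.

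A second concrete error is in step (iv): you assert that an $so$-convergent net in $M(F)$ is norm-bounded ``by uniform boundedness.'' Banach--Steinhaus gives this only for \emph{sequences}; an $so$-convergent (i.e.\ SOT-convergent, viewing $\mu$ as the convolution operator on $L^1(F)$) net need not be bounded. So the $so$-$w^*$-continuity of $\varphi$ cannot be reduced to the bounded case the way you propose; it needs either an argument that exhibits each functional $\mu\mapsto\dua{\varphi(\mu),h}$, $h\in C_0(G)$, as $so$-continuous directly (via the identities $\varphi(\mu)\ast\varphi_0(f)=\varphi_0(\mu\ast f)$ and $\varphi(\mu)=\varphi(\mu)\ast E$, together with a factorization argument), or a restriction to bounded nets. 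As written, steps (i), (iii) and (iv) all lean on the unproved well-definedness and on this false boundedness claim, so the proposal is a plausible outline rather than a proof.
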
	
Thus in the sequel it will be enough to consider $so$-$w^*$-continuous  homomorphisms between the 
measure algebras. 
\begin{definition} Let $F$ and $G$  be locally compact groups 
and $$\varphi: M(F)\rightarrow M(G)$$ be a $so$-$w^*$-continuous  homomorphism. 
We say that $\varphi$ has the {\em extension property} if there exists a $w^*$-continuous 
homomorphism $\psi: VN(F)\rightarrow VN(G)$ such that $\psi\circ\lambda^F=\lambda^G\circ \varphi.$
\end{definition}

Note that it follows from \cite[Proposition 5.5]{sto2011} that a $so$-$w^*$ continuous, contractive  homomorphism 
$\varphi:M(F)\rightarrow M(G) $ is a $*$-homomorphism.

\medskip

For clarity of exposition, we begin with the case of a unital  homomorphism, since in this case the 
construction of the `symbol' $\theta$ is more direct.  

\subsection{The unital case} 

In this subsection we assume that  $\varphi:M(F)\rightarrow M(G) $ is a contractive unital $*$-homomorphism. 

\begin{lemma}\label{300} Let $F$ and $G$  be locally compact groups and 
$$\varphi: M(F)\rightarrow M(G)$$ be a $so$-$w^*$-continuous contractive unital $*$-homomorphism 
$(\varphi(\delta_{e_F})=\delta_{e_G})$.
Then there exist, necessarily unique, continuous homomorphisms 
$$\theta: F\rightarrow G, \;\beta: F\to \bb T$$ such that 
$$\varphi(\delta^F_x)=\beta(x)\delta^G_{\theta(x)}, \;x\in F.$$ 
\end{lemma}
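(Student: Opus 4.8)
**Proof plan for Lemma 3.9 (the structure of unital contractive $*$-homomorphisms $\varphi: M(F)\to M(G)$).**

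The plan is to analyze the action of $\varphi$ on the point masses $\delta^F_x$, $x\in F$, which form a subgroup of the unitary group of $M(F)$ under convolution. Since $\varphi$ is a contractive unital $*$-homomorphism, each $\varphi(\delta^F_x)$ is a unitary element of $M(G)$ (because $\delta^F_x$ is unitary: $\delta^F_x \ast \delta^F_{x^{-1}} = \delta_{e_F}$, and $*$-homomorphisms preserve this), and the assignment $x\mapsto \varphi(\delta^F_x)$ is a group homomorphism from $F$ into the unitary group of $M(G)$. The first key step is to invoke the structure of unitaries in $M(G)$: a classical fact (going back to work on isometries of group algebras; this is exactly the ingredient used by Greenleaf and Stokke) is that every invertible measure $\mu\in M(G)$ with $\mu \ast \mu^* = \delta_{e_G}$ and of norm one is of the form $c\,\delta^G_g$ for some $c\in\bb T$ and $g\in G$. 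Hence for each $x\in F$ we may write $\varphi(\delta^F_x) = \beta(x)\,\delta^G_{\theta(x)}$ for uniquely determined $\beta(x)\in\bb T$ and $\theta(x)\in G$; uniqueness is immediate since $\delta^G_g$ determines $g$ and then $\beta(x)$ is determined.

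The second step is to check that $\theta: F\to G$ and $\beta: F\to\bb T$ are homomorphisms. This follows by comparing $\varphi(\delta^F_x \ast \delta^F_y) = \varphi(\delta^F_{xy})$ with $\varphi(\delta^F_x)\ast\varphi(\delta^F_y) = \beta(x)\beta(y)\,\delta^G_{\theta(x)\theta(y)}$: equating these gives $\delta^G_{\theta(xy)} = \beta(x)\beta(y)\beta(xy)^{-1}\delta^G_{\theta(x)\theta(y)}$, and since a point mass is determined by its support and by having coefficient $1$, we read off $\theta(xy) = \theta(x)\theta(y)$ and $\beta(xy) = \beta(x)\beta(y)$. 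The third step is continuity. Here I would use that $\varphi$ is $so$-$w^*$-continuous together with Proposition 4.3 (the $w^*$-continuity of $\gl$): the map $x\mapsto \delta^F_x$ is $so$-continuous on $F$ (this is a standard fact — $x\mapsto \delta^F_x \ast f = l_x f$ is $L^1$-continuous for $f\in L^1(F)$), so $x\mapsto \varphi(\delta^F_x)$ is $so$-to-$w^*$ continuous into $M(G)$; hence $x\mapsto \lambda^G(\varphi(\delta^F_x)) = \beta(x)\lambda^G_{\theta(x)}$ is $w^*$-continuous into $VN(G)$. Testing against vector functionals coming from $A(G)$ (equivalently, pairing with $u\in A(G)$ and using $\lambda^G_g$ applied to $u$ equals $u(g)$ after the standard identification, as in Lemma 4.1) shows $x\mapsto \beta(x)u(\theta(x))$ is continuous for every $u\in A(G)$; taking $u$ with $u(\theta(x_0))\neq 0$ near a fixed point and $u$ real and controlling the modulus separately yields continuity of both $\theta$ (using that $A(G)$ separates points and that on a small neighbourhood one can normalize) and $\beta$.

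The main obstacle I expect is the first step: establishing cleanly that the only norm-one invertible $*$-unitaries in $M(G)$ are scalar multiples of point masses. One route is to pass through $\lambda^G$ to $VN(G)$ and argue there, but $\lambda^G$ need not be injective on $M(G)$ in general, so one must instead argue directly in $M(G)$; the polar decomposition of measures and the fact that $\|\mu\ast\nu\| \le \|\mu\|\|\nu\|$ with equality forcing rigidity (the support of $\mu\ast\mu^*$ being $\{e_G\}$ forces $|\mu|$ to be concentrated at a point) is the content one needs, and this is precisely where one should cite the relevant result from Greenleaf \cite{green} or Stokke \cite{sto2011} rather than reprove it. A secondary subtlety is disentangling continuity of $\theta$ from continuity of $\beta$, but this is routine once one has continuity of the product $x\mapsto\beta(x)u(\theta(x))$ for all $u\in A(G)$ together with regularity of $A(G)$.
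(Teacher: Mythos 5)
Your proposal is correct and follows essentially the same route as the paper: the structural fact that norm-one unitary measures are scalar multiples of point masses is exactly what the paper extracts by citing Greenleaf's Theorem 3.1.8 applied to the group $\{\varphi(\delta_x^F)\}$ in the ball of $M(G)$, the verification that $\theta$ and $\beta$ are homomorphisms is identical, and your continuity argument starts from the same $so$-continuity of $x\mapsto\delta_x^F$ (the paper disentangles $\theta$ from $\beta$ by a subnet-plus-positivity argument where you pair with test functions and use regularity, but the two are interchangeable). One minor aside: $\lambda^G$ is in fact injective on $M(G)$, though this does not affect your argument since you rightly carry out the structure step directly in $M(G)$.
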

\begin{proof} 
Applying \cite[Theorem 3.1.8]{green} to the group $\{\varphi(\delta_x^F)\}\subseteq \mathop{\rm ball }(M(G))$, 
for each $x\in F$ we have $\varphi(\delta_x^F)=b\delta^G_g$  for some $b\in\bb T$ and $g\in G$. 
We define maps
$$\theta: F\to  G, \quad\beta: F\to \bb T$$ by the following rule
$$\theta(x)=g \text{ and } \beta(x)=b\;\Leftrightarrow\; \varphi(\delta_x^F)=b\delta^G_g\, .$$
Using the fact that $\varphi$ is unital and multiplicative, it is easily checked that  
$\theta$ and $\beta$ are well defined and are group homomorphisms.
We have
$$\varphi(\delta^F_x)=\beta(x)\delta^G_{\theta(x)},\; \; x\in F.$$  
Since $\varphi$ is $so$-$w^*$-continuous the homomorphisms  $\theta$ and $\beta$ are continuous. 
 This follows from  \cite[Lemma 4.1]{sto2011}. We include a short proof for completeness:

 Assume that $x_i\rightarrow x.$ We shall prove that $\beta(x_i)\rightarrow \beta(x)$ and 
$\theta(x_i)\rightarrow \theta(x).$ 
Since $w^*\text{-}\lim\delta_{x_i}^F=\delta_{x}^F$,  by  \cite[Lemma 1.1.2]{green}
we have $so$-$\lim\delta_{x_i}^F=\delta_{x}^F$
and hence $w^*$-$\lim\varphi(\delta_{x_i}^F)=\varphi(\delta_{x}^F)$. Thus $$w^*\text{-}\lim_i\beta(x_i)\delta^G_{\theta(x_i)}=\beta(x)\delta^G_{\theta(x)}.$$

Take a subnet $(\delta_{\theta(x_j)})$  of $(\delta_{\theta(x_i)})$ which converges, say to $\mu.$ 
Since $\{\beta(x_j)\}$ lies in a compact set, passing to a subnet if necessary we may assume that 
$\beta(x_j)\to b$ for some $b\in\bb T$ .
We have 
$$\mu=w^*\text{-}\lim_j\delta_{\theta(x_j)}=
w^*\text{-}\lim_j\frac{1}{\beta(x_j)}\beta(x_j)\delta_{\theta(x_j)}=\frac{1}{b}\beta(x)\delta_{\theta(x)}.$$ 
Since $\mu$ is a positive measure, being the limit of $(\delta_{\theta(x_j)})$,
we have $$\beta(x)=b\;\text{and }\;  \delta_{\theta(x)}=\mu.$$
We proved that every convergent subnet of $(\delta_{\theta(x_i)})$  converges to $ \delta_{\theta(x)}.$ 
It follows that $w^*\text{-}\lim\delta_{\theta(x_i)}=\delta_{\theta(x)}$ and hence 
$$\theta(x_i)\rightarrow \theta(x).$$
A similar argument shows that   
$\beta(x_i)\to \beta(x).$   
\end{proof} 

\begin{theorem}\label{400} Let $F$ and $G$  be locally compact groups and 
$$\varphi: M(F)\rightarrow M(G)$$ be a unital contractive $so$-$w^*$-continuous homomorphism.
The following are equivalent: 

(i) The map $\varphi$ has the extension property.

(ii) The homomorphism $\theta$ is proper. 
\end{theorem}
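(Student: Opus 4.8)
The plan is to connect $\varphi: M(F) \to M(G)$ with the map $\rho(u) = u \circ \theta : A(G) \to C_0(G)$ of Proposition \ref{xxxxxx}, and then invoke that Proposition to pass between properness of $\theta$ and $\rho(A(G)) \subseteq A(F)$. The key observation is that, by Lemma \ref{di} and Lemma \ref{300}, the adjoint behavior of $\lambda^G \circ \varphi$ on $A(G)$ should be exactly $u \mapsto \beta \cdot (u \circ \theta)$, up to the unimodular twist $\beta$; multiplying by the unimodular function $\beta$ is an isometric bijection of $A(F)$ onto itself and of $C_0(F)$ onto itself, so it does not affect the membership question. So the task reduces to showing: $\varphi$ has the extension property $\iff$ the functional $u \mapsto \dua{\lambda^G(\varphi(\cdot)), u}$ on $M(F)$ ``factors through'' $A(F)$ in the appropriate sense, i.e. $\beta \cdot (u\circ\theta) \in A(F)$ for all $u \in A(G)$.

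First I would show (i) $\implies$ (ii). Assume $\psi: VN(F) \to VN(G)$ is $w^*$-continuous with $\psi \circ \lambda^F = \lambda^G \circ \varphi$. Take the preadjoint $\psi_* : A(G) \to A(F)$ (using that $A(H) = VN(H)_*$ for any locally compact group $H$). For $u \in A(G)$ and $x \in F$, compute $\dua{\lambda^F_x, \psi_*(u)} = \dua{\psi(\lambda^F_x), u} = \dua{\lambda^G(\varphi(\delta^F_x)), u} = \dua{\lambda^G(\beta(x)\delta^G_{\theta(x)}), u} = \beta(x)\, u(\theta(x))$, using Lemma \ref{300} and that $\lambda^G(\delta^G_g) = \lambda^G_g$ is evaluation-at-$g$ when paired with $A(G)$. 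Hence $\psi_*(u)$, as an element of $A(F) \subseteq C_0(F)$, is the function $x \mapsto \beta(x) u(\theta(x))$. Since $\beta$ is a continuous unimodular character-like function (a continuous homomorphism $F \to \mathbb{T}$, hence in particular $\overline\beta \in B(F)$ by Eymard, and multiplication by $B(F)$ preserves $A(F)$), we get $\overline\beta \cdot \psi_*(u) = u \circ \theta \in A(F)$ for every $u \in A(G)$; that is, $\rho(A(G)) \subseteq A(F)$, so $\theta$ is proper by Proposition \ref{xxxxxx}.

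For (ii) $\implies$ (i): assume $\theta$ proper, so $\rho(A(G)) \subseteq A(F)$ by Proposition \ref{xxxxxx}, and hence the map $u \mapsto \beta \cdot (u\circ\theta)$ sends $A(G)$ into $A(F)$ (again since $\beta \in B(F)$ multiplies $A(F)$ into itself). This map is bounded $A(G) \to A(F)$: boundedness of $u \mapsto u\circ\theta$ follows from the closed graph theorem (it is $\|\cdot\|_\infty$-continuous and $A(F) \to C_0(F)$ is a contractive injection with closed graph), and multiplication by $\beta$ is a contraction on $A(F)$. Let $\psi := (\text{that map})^* : VN(F) \to VN(G)$, which is automatically $w^*$-continuous as a Banach-space adjoint. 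It remains to check $\psi \circ \lambda^F = \lambda^G \circ \varphi$. By $so$-$w^*$-continuity of $\varphi$ (Proposition \ref{z}) and $w^*$-density of $\operatorname{span}\{\delta^F_x : x\in F\}$ in $M(F)$ — more precisely, using that $\lambda^F$ is $w^*$-$w^*$ and $so$-$w^*$ continuous and that $\varphi$ is $so$-$w^*$ continuous — it suffices to verify the identity on point masses. For $x \in F$ and $u \in A(G)$: $\dua{\psi(\lambda^F_x), u} = \dua{\lambda^F_x, \beta\cdot(u\circ\theta)} = \beta(x) u(\theta(x)) = \dua{\lambda^G(\beta(x)\delta^G_{\theta(x)}), u} = \dua{\lambda^G(\varphi(\delta^F_x)), u}$, which gives the claim on point masses, and hence everywhere by the continuity/density argument. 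Thus $\varphi$ has the extension property.

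The main obstacle I anticipate is the care needed in the density/continuity argument for (ii) $\implies$ (i): one must pass from the identity $\psi(\lambda^F_x) = \lambda^G(\varphi(\delta^F_x))$ on point masses to $\psi(\lambda^F(\mu)) = \lambda^G(\varphi(\mu))$ for all $\mu \in M(F)$, and the natural route is through the $so$ topology (where point masses are ``dense enough'' via approximate identities / Lemma 1.1.2 of \cite{green}) combined with $so$-$w^*$-continuity of both $\lambda^F$ and $\varphi$ and $w^*$-continuity of $\psi$ — but $\psi \circ \lambda^F$ is only guaranteed $so$-$w^*$-continuous, not $w^*$-$w^*$-continuous, so one cannot simply invoke $w^*$-density of $\operatorname{span}\{\delta_x\}$ in $M(F)$ directly; the argument must be run in the $so$ topology throughout. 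A secondary technical point is justifying that $\beta$, a continuous homomorphism $F \to \mathbb{T}$, lies in $B(F)$ and acts as a (completely) bounded multiplier of $A(F)$, which is standard (it is a coefficient of a one-dimensional unitary representation) but should be stated.
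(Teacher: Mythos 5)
Your proposal is correct and follows essentially the same route as the paper: for (i)$\Rightarrow$(ii) the preadjoint computation $\psi_*(u)(x)=\beta(x)u(\theta(x))$ followed by stripping off $\overline\beta\in B(F)$ and invoking Proposition \ref{xxxxxx}, and for (ii)$\Rightarrow$(i) dualizing $u\mapsto\beta\cdot(u\circ\theta)$ and verifying $\psi\circ\lambda^F=\lambda^G\circ\varphi$ on point masses, then extending by $so$-density of (the convex hull of) $\{b\delta^F_x\}$ in the ball of $M(F)$ together with $so$-$w^*$-continuity of both sides, exactly as in the paper. The only quibble is in your discussion of the obstacle: $\psi\circ\lambda^F$ is in fact $w^*$-$w^*$-continuous (both factors are); it is $\lambda^G\circ\varphi$ that is only guaranteed $so$-$w^*$-continuous, but your conclusion that the density argument must be run in the $so$ topology is precisely what the paper does.
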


\begin{proof} $(i)\Rightarrow (ii)$ \\
Suppose there exists a w*-continuous map $\psi: VN(F)\rightarrow VN(G)$  such that 
$\psi\circ\lambda^F=\lambda^G\circ \varphi$. 
For all $x\in F$ we have 
$$\psi(\lambda_x^F)=\psi\circ\lambda(\delta_x^F)=\lambda\circ\varphi (\delta_x^F)=
\lambda(\beta(x)\delta^G_{\theta(x)})=\beta(x)\lambda^G_{\theta(x)}.$$
Therefore if $\psi_*: A(G)\rightarrow A(F)$ is the predual of $\psi$ then for all $u\in A(G)$ we have 
$$\dua{\gl_x^F,\psi_*(u)}=\dua{\psi(\gl_x^F),u}=\dua{\beta(x)\gl^G_{\theta(x)},u}=\beta(x)(u\circ\theta)(x)$$ 
and thus the map $x \to \beta(x)(u\circ\theta)(x)$ is  in $A(F)$.  
Since the map $x\to \overline{\beta(x)}$ is in the Fourier-Stieltjes algebra $B(F)$ 
($\overline\beta$ is a representation) it follows that  $u\circ\theta \in A(F).$
By Proposition \ref{xxxxxx}, $\theta$ must be a proper homomorphism.   

\medskip

\noindent $(ii)\Rightarrow(i)$ \\
Conversely assume that  $\theta$ is a proper homomorphism. 
Then for  all $u\in A(G)$ the map $u\circ\theta$ is in $A(F)$ and so is the map 
$F\to\bb C:x\mapsto\beta(x)(u\circ\theta)(x)$. Hence we may define 
$$\rho: A(G)\rightarrow A(F)\;\text{by}\;\rho(u)(x)=\beta(x)(u\circ\theta)(x),\,\,x\,\in\,F.$$
Then $\psi=\rho^*: VN(F)\rightarrow VN(G)$ is a w* continuous *-homomorphism satisfying 
$$\psi(\lambda_x^F)=\beta(x)\lambda^G_{\theta(x)}.$$ 
Thus  we have 
$$(\psi\circ\gl^F)(\delta_x^F)=\psi(\lambda_x^F)=\beta(x)\lambda^G_{\theta(x)}=
\lambda^G(\beta(x)\delta^G_{\theta(x)})=(\gl^G\circ\varphi)(\delta_x^F)$$
for all $x\in F$.\\
Since the map $\gl^F: M(F)\to VN(F)$ is $so$-$w^*$-continuous, $\psi\circ\gl^F$ is  $so\text{-}w^*$-continuous;
also, $\varphi:M(F)\to M(G)$  is $so$-$w^*$-continuous  and  $\gl^G: M(G)\to VN(G)$ is $w^*$-continuous, 
hence  $\gl^G\circ\varphi$ is also $so$-$w^*$-continuous. But the convex hull of 
$\{b\delta_x^F: b\in\bb T, x\in F\}$ is $so$-dense in $\mathop{\rm ball} M(F)$ \cite[Lemma 1.1.3]{green}. 
Therefore from the previous equality we obtain
$\psi\circ\lambda^F=\lambda^G\circ \varphi.$
\end{proof}

\subsection{The general case } 
In order to deal with a not necessarily unital contractive *-homomorphism, we will need some preparation.

\begin{remark}\label{idem} 
Let $G_0$ be a locally compact group and $K\subseteq G_0$ a normal compact subgroup
with  normalized Haar measure $m_K$. If $\eps: K\to\bb T$  is a continuous character of $K$, the measure 
$\eps m_K$ defines a contractive idempotent element of $M(G_0)$ (this follows easily from the definitions). 
Hence  $P_0:=\gl^{G_0}(\eps m_K)$ is an orthogonal projection in $VN(G_0)$.
\end{remark}
	
\begin{lemma}\label{ker} Let $G_0$ be a locally compact group and $K\subseteq G_0$ a normal compact subgroup
with  normalized Haar measure $m_K$. If $\pi: G_0\to G_0/K$  is the quotient map and $Q_0\in VN(G_0)$  
denotes the projection $Q_0=\lambda^{G_0}(m_K)\in VN(G_0)$,
there is a *-isomorphism $j_0: VN(G_0)Q_0\to VN(G_0/K)$ such that \\
$j_0(\gl_g^{G_0}Q_0)=\gl_{\pi(g)}^{G_0/K}$.
\end{lemma}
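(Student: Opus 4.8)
The goal is to produce a $*$-isomorphism $j_0: VN(G_0)Q_0 \to VN(G_0/K)$ carrying $\lambda_g^{G_0}Q_0$ to $\lambda_{\pi(g)}^{G_0/K}$. The natural route is to realise $VN(G_0)Q_0$ concretely as acting on a subspace of $L^2(G_0)$ that can be identified with $L^2(G_0/K)$. First I would identify $Q_0 = \lambda^{G_0}(m_K)$ explicitly as an operator: since $m_K$ is the normalised Haar measure of the compact subgroup $K$, the operator $\lambda^{G_0}(m_K)$ acts on $\xi\in L^2(G_0)$ by $(\lambda^{G_0}(m_K)\xi)(t) = \int_K \xi(k^{-1}t)\,dm_K(k)$, which is precisely the conditional expectation / averaging projection onto the closed subspace $\cl H_0 := \{\xi\in L^2(G_0): \xi(kt)=\xi(t) \text{ for a.a. } t, \text{ all } k\in K\}$ of functions constant on left cosets $Kt$. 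One checks $Q_0$ is self-adjoint and idempotent (so an orthogonal projection), which also recovers Remark \ref{idem} with $\eps$ trivial; normality of $K$ is what guarantees $Q_0$ lies in $VN(G_0)$ (it commutes with all right translations), so that $VN(G_0)Q_0 = Q_0 VN(G_0)Q_0$ is a von Neumann algebra on $\cl H_0$ with unit $Q_0$.

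Next I would set up the unitary $U: \cl H_0 \to L^2(G_0/K)$. Using Weil's formula for the quotient by a compact (hence unimodular, and the modular functions match so $G_0/K$ carries an invariant measure), functions on $G_0$ that are $K$-left-invariant correspond to functions on $K\backslash G_0 \cong G_0/K$; more precisely $U$ sends a $K$-left-invariant $\xi$ to the function $\dot t \mapsto \xi(t)$ on the quotient, and Weil's integration formula shows $U$ is isometric and onto. The key computation is then that for $g\in G_0$, conjugation by $U$ intertwines $\lambda_g^{G_0}|_{\cl H_0}$ with $\lambda_{\pi(g)}^{G_0/K}$: indeed $\cl H_0$ is $\lambda^{G_0}$-invariant (left translation preserves $K$-left-invariance since $K$ is normal — here normality is again used), and $(\lambda_g^{G_0}\xi)(t)=\xi(g^{-1}t)$ pushes down to $\dot t\mapsto \xi(g^{-1}t) = (\lambda_{\pi(g)}^{G_0/K}U\xi)(\dot t)$. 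Define $j_0(x) := U x|_{\cl H_0} U^*$ for $x\in VN(G_0)Q_0$; this is a normal $*$-isomorphism of $VN(G_0)Q_0$ onto $U(VN(G_0)Q_0)U^*$, and by the intertwining it sends $\lambda_g^{G_0}Q_0$ to $\lambda_{\pi(g)}^{G_0/K}$.

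It remains to check that $j_0$ is \emph{onto} $VN(G_0/K)$. Since $j_0$ is a normal $*$-homomorphism and its range contains $\{\lambda_{\pi(g)}^{G_0/K}: g\in G_0\} = \{\lambda_h^{G_0/K}: h\in G_0/K\}$ (as $\pi$ is surjective), the range is a von Neumann subalgebra of $VN(G_0/K)$ containing the generators, hence equals $VN(G_0/K)$. Injectivity is automatic since a $*$-homomorphism of von Neumann algebras restricted to $VN(G_0)Q_0$ with $Q_0$ as identity has zero kernel iff it is nonzero on the identity, which holds; alternatively, $x\mapsto Ux|_{\cl H_0}U^*$ is visibly injective because $U$ is unitary and $x = xQ_0$ acts as $0$ off $\cl H_0$ anyway.

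\textbf{Main obstacle.} The routine-looking but genuinely load-bearing point is the identification $Q_0 = \lambda^{G_0}(m_K)$ with the averaging projection onto $\cl H_0$ together with the measure-theoretic bookkeeping in Weil's formula — ensuring the quotient measure on $G_0/K$ is normalised so that $U$ is isometric (not merely a scalar multiple of an isometry), and that the resulting $L^2(G_0/K)$ is the one whose von Neumann algebra we want. The normality of $K$ enters at two places — to place $Q_0$ in $VN(G_0)$ and to make $\cl H_0$ invariant under left translations — and I would make sure both uses are stated cleanly. Everything after the correct setup (self-adjointness, idempotency, intertwining, surjectivity onto the generated von Neumann algebra) is formal.
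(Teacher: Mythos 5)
Your proof is correct, but it takes a genuinely different route from the paper's. The paper argues at the level of preduals: by Eymard's Proposition 3.25 the dual of $u\mapsto u\circ\pi: A(G_0/K)\to A(G_0)$ is a w*-continuous \emph{surjective} *-homomorphism $\psi: VN(G_0)\to VN(G_0/K)$ with $\psi(\lambda_g^{G_0})=\lambda_{\pi(g)}^{G_0/K}$, whose kernel is the ideal $VN(G_0)Q^{\perp}$, $Q$ being the projection onto the space $L^2_K(G_0)$ of left-$K$-invariant functions; the isomorphism $j_0$ is then the induced map $VN(G_0)Q\cong VN(G_0)/\ker\psi\to VN(G_0/K)$, and the proof concludes by identifying $Q$ with $Q_0=\lambda^{G_0}(m_K)$ via the characterisation $f=m_K*f=f*m_K$. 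You instead construct the isomorphism spatially: identify $Q_0$ as the averaging projection onto $\cl{H}_0=L^2_K(G_0)$, transport $\cl{H}_0$ onto $L^2(G_0/K)$ by the unitary coming from Weil's integration formula, and conjugate. The two arguments share the key identification of the range of $Q_0$ with the left-$K$-invariant $L^2$-functions; yours is self-contained (modulo Weil's formula) and exhibits $j_0$ as a unitary equivalence, which makes normality, injectivity and surjectivity transparent, while the paper's is shorter because it outsources the surjectivity of $\psi$ and the computation of its kernel to Eymard.

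One small correction: normality of $K$ is not what places $Q_0$ in $VN(G_0)$ --- one has $\lambda^{G_0}(\mu)\in VN(G_0)$ for \emph{every} $\mu\in M(G_0)$, since any left convolution operator commutes with the right regular representation and $VN(G_0)$ is its commutant. Where normality genuinely enters is at your second use (invariance of $\cl{H}_0$ under all $\lambda_g^{G_0}$, equivalently centrality of $Q_0$, which is what makes $VN(G_0)Q_0$ a von Neumann algebra restricting faithfully to $\cl{H}_0$) and in making $G_0/K$ a group with $K\backslash G_0=G_0/K$. This misattribution does not affect the validity of the argument.
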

\begin{proof} By  \cite[Proposition 3.25]{eym} the dual of the
map $u\mapsto u\circ \pi: A(G_0/K)\to A(G_0)$ is a w*-continuous onto *-homomorphism 
$$\psi: VN(G_0)\to VN(G_0/K)$$ satisfying $\psi(\lambda_g^{G_0})= \gl_{\pi(g)}^{G_0/K}$ for all $g\in G$. 

The kernel of this map is the ideal $J=VN(G_0)Q^\bot$ where $Q$ is the projection onto the Hilbert space
$$L^2_K(G_0)=\{f\in L^2(G_0): f(kx)=f(x),\,\,\forall\,x\in G_0,\,k\,\in K\}.$$
It follows that $\psi$ induces an isomorphism from $VN(G_0)/J = VN(G_0)Q$ onto $VN(G_0/K)$ satisfying
$\gl_g^GQ \mapsto  \gl_{\pi(g)}^{G_0/K}$ for all $g\in G$. This map is the claimed isomorphism $j_0$,
because the projection $Q$ is in fact $Q_0$.

Indeed,  by \cite[3.2]{eym}, the space $L^2_K(G_0)$ coincides with  the space of all
$f\in L^2(G_0)$ satisfying $f=f*m_K=m_K*f$
since $K$ is normal, equivalently $f=\gl(m_K)f$ or $f=Q_0(f)$. Thus $Q=Q_0$ and we are done.
\end{proof}

\begin{lemma}\label{SWSTOS} Let $G_0$ be a locally compact group, $K$  a compact normal subgroup of $G_0$,
$\varepsilon$ a continuous character of $K$,  $m_K$ the normalized Haar measure of $K$ and 
$$A_\varepsilon=\{u\in A(G_0): u(gk)=u(g)\varepsilon ^{-1}(k): \forall g\in G_0, k\in K\}.$$
Then there exists an isometric surjection 
$j_\eps: VN(G_0)\gl(\varepsilon m_K)\rightarrow A_\varepsilon^*$ such that
$$\dua{j_\eps(\lambda_g\lambda(\varepsilon m_K)),u}=u(g)\quad \text{for all }\; u\in A_\varepsilon, \ g\in G_0.$$
\end{lemma}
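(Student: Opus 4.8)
\textbf{Plan of proof for Lemma \ref{SWSTOS}.}
The idea is to identify $A_\varepsilon$ with the predual of the reduced von Neumann algebra $VN(G_0)P_0$, where $P_0 = \lambda(\varepsilon m_K)$ is the contractive idempotent of Remark \ref{idem}. First I would observe that since $K$ is normal and $\varepsilon$ is a character, the functions in $A_\varepsilon$ are exactly those $u \in A(G_0)$ satisfying the covariance $u = u \ast (\varepsilon^{-1} m_K)$ in a suitable sense; dually, on the von Neumann side the projection $P_0$ implements precisely this condition. The plan is then to use the standard duality $A(G_0) = VN(G_0)_*$: for $u \in A(G_0)$, the functional $T \mapsto \langle T, u\rangle$ restricted to $VN(G_0)P_0$ depends only on the image of $u$ under the natural map $A(G_0) \to (VN(G_0)P_0)_*$, whose kernel is the annihilator of $VN(G_0)P_0$, i.e. $VN(G_0)P_0^\bot$-annihilating functionals. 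So I would show $A_\varepsilon$ is exactly the set of $u \in A(G_0)$ that annihilate $VN(G_0)(1-P_0)$, hence is isometrically the predual of $VN(G_0)P_0$, which is the same as saying $VN(G_0)P_0 \cong A_\varepsilon^*$ isometrically.

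Concretely, I would define $j_\varepsilon$ as the composition of the canonical identification $VN(G_0)P_0 \to (A_\varepsilon)^*$ coming from the pairing $\langle TP_0, u\rangle = \langle T, u\rangle$ for $u \in A_\varepsilon$, and check the formula on the generators $\lambda_g P_0 = \lambda_g\lambda(\varepsilon m_K)$. For this, compute $\langle \lambda_g \lambda(\varepsilon m_K), u\rangle$. Writing $u$ as a coefficient $u(\cdot) = \langle \lambda_\cdot \xi, \eta\rangle$ and using $\lambda(\varepsilon m_K) = \int_K \varepsilon(k)\lambda_k\, dm_K(k)$, one gets
\[
\langle \lambda_g\lambda(\varepsilon m_K), u\rangle = \int_K \varepsilon(k) u(gk)\, dm_K(k) = \int_K \varepsilon(k)\varepsilon^{-1}(k) u(g)\, dm_K(k) = u(g),
\]
using the defining covariance of $A_\varepsilon$ and $m_K(K)=1$. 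This simultaneously shows the map is well defined on $A_\varepsilon$ (the value $u(g)$ only depends on $u$, not on the representation) and gives the stated formula.

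For the isometric surjectivity, I would argue as follows. Surjectivity of $j_\varepsilon$ onto $A_\varepsilon^*$ follows because $VN(G_0)P_0$ is a von Neumann algebra (a reduced algebra of $VN(G_0)$), so it equals the bidual of its predual; the predual of $VN(G_0)P_0$ is $A(G_0)P_0^\bot$-quotient, and I must check this quotient is isometrically $A_\varepsilon$. The key point is that $A_\varepsilon \hookrightarrow A(G_0)$ is a \emph{completely contractively complemented} (in fact $1$-complemented) subspace: the map $u \mapsto u \ast (\varepsilon^{-1}m_K)$, or more precisely convolution against the idempotent, is a contractive projection of $A(G_0)$ onto $A_\varepsilon$ — this is dual to right multiplication by $P_0$ on $VN(G_0)$, which is contractive since $P_0$ is a (self-adjoint, using normality of $K$ and the character property) idempotent of norm $1$. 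Once $A_\varepsilon$ is seen to be $1$-complemented in $A(G_0)$ with complement the kernel of the pairing against $VN(G_0)P_0$, the identification $(VN(G_0)P_0)_* = A_\varepsilon$ isometrically is automatic, and dualizing gives the isometric surjection $j_\varepsilon : VN(G_0)P_0 \to A_\varepsilon^*$.

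\textbf{Main obstacle.} The routine part is the pairing computation; the part needing care is verifying that $P_0 = \lambda(\varepsilon m_K)$ is a \emph{self-adjoint} projection (so that $VN(G_0)P_0$ is a von Neumann algebra and the complementation is isometric) and that the predual decomposition of $VN(G_0)$ induced by $P_0$ matches $A_\varepsilon$ on the nose rather than up to isomorphism. Self-adjointness of $\varepsilon m_K$ as a convolution operator uses both the normality of $K$ (so that $m_K$ is conjugation-invariant) and that $\varepsilon$ is unitary-valued with $\varepsilon(k^{-1}) = \overline{\varepsilon(k)}$; this is the analogue, with a twist, of the fact used in Lemma \ref{ker} that $\lambda(m_K) = Q_0$ is a projection. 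I would record this as a short computation: $(\varepsilon m_K)^\ast = \widetilde{\varepsilon m_K}$ where $\tilde\mu(E) = \overline{\mu(E^{-1})}$, and check $\widetilde{\varepsilon m_K} = \varepsilon m_K$ and $(\varepsilon m_K)\ast(\varepsilon m_K) = \varepsilon m_K$ directly from $m_K \ast m_K = m_K$ and multiplicativity of $\varepsilon$.
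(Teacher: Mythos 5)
Your proposal is correct and follows essentially the same route as the paper: the central computation $\dua{\lambda_g\lambda(\varepsilon m_K),u}=\int_K u(gk)\varepsilon(k)\,dm_K(k)=u(g)$ for $u\in A_\varepsilon$ is exactly the paper's, and your identification of $A_\varepsilon$ as the range of the contractive idempotent on $A(G_0)$ predual to $T\mapsto TP_0$ is precisely the mechanism (the paper's map $\psi$) used there to get injectivity, surjectivity and the isometry. The self-adjointness of $P_0$ that you flag as the main obstacle is dispatched beforehand in Remark \ref{idem}, so no further work is needed on that point.
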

\begin{proof} Let $P_0=\gl(\varepsilon m_K) $. 
Given $u\in A(G_0)$, the map $$\psi(u):VN(G_0)\to\bb C: T\mapsto \dua{TP_0,u}$$ is w*-continuous, 
hence defines an element of $A(G_0)$. 

Thus the map $\psi:A(G_0)\to A(G_0)$  is the predual of the map $T\mapsto TP_0$.

We claim that for all $g\in G_0$, 
$$\psi(u)(g)=\int_K u(gk)\varepsilon (k)d m_K(k)\, .$$
Indeed,  for all $g\in G_0$,
\begin{align*}
\dua{\gl_gP_0, u} &=\dua{\gl(\delta_g)\gl(\varepsilon m_K), u} 
= \dua{\gl(\delta_g*\varepsilon m_K), u} \\
& = \int_{G_0} u(t)d(\delta_g*\varepsilon m_K)(t) = 
\iint u(ts)d\delta_g(t)d(\varepsilon m_K)(s) \\
&=\int_K u(gk)\varepsilon (k)d m_K(k) \, . 
\end{align*}
But if $u\in A_\varepsilon$ then this integral equals $u(g)$,  
hence $\dua{\gl_gP_0, u}=\dua{\gl_g, u}$ and so 
 $$\dua{TP_0, u}=\dua{T, u}$$ for all $T \in VN(G_0)$ and $u \in A_\varepsilon$.

It follows that the map $j_\eps$ that sends $TP_0\in A(G_0)^*$ to its restriction to $A_{\eps}$ is onto.

We show that it is injective. If 
$\dua{TP_0, u}=0$ for all $u \in A_{\epsilon}$ then, for all $v \in A(G_0)$  
$$\dua {TP_0, v}=\dua {(TP_0)P_0, v}=\dua{TP_0, \psi(v)}=0$$ (since $P_0$ is idempotent)
and hence $TP_0=0$. 
\end{proof}  

In this subsection we assume that  $\varphi:M(F)\rightarrow M(G) $ is a so-w*-continuous
contractive *-homomorphism. 
The following can be found in \cite[Section 4.2]{green} and \cite[Theorem 3.1.8]{green}: 
Let    
$$\Gamma=\{\varphi(\delta_x^F): x\in F\}.$$ 
This is a subgroup of $\mathop{\rm ball } (M(G))$ under convolution. Its unit  is $\varepsilon m_{K}$
where $m_K$ is the normalized Haar measure of a compact subgroup $K$ of $G$,  
viewed as an element of $M(G)$ and $\eps$ is a continuous character of $K. $
Further, there exists a subgroup 
$G_{00}$  of $G$ such that for all $x\in F$ there exists $(b,g)\in \bb T\times G_{00}$ such that 
$$\varphi(\delta_x^F)=b\delta_g\ast \varepsilon m_{K}.$$
The group $K$ is a normal subgroup of $G_{00}$, hence of its closure  $G_0:=\overline{G_{00}}.$   
\\
The group $G_0$ is a locally compact subgroup of $G$.  \smallskip

The proof of the following Lemma can be deduced from \cite[Lemma 5.2]{sto2011} or 
\cite[Proposition 3.1.6]{green}. 

\begin{lemma}\label{y}
(i) For every $g\in G_0$ we have $\varepsilon m_{K}\ast\delta_g=\delta_g\ast \varepsilon m_{ K}.$

(ii)  If $x\in K$ then $$\varepsilon(x)\delta_x\ast \varepsilon m_K=\varepsilon m_K.$$
\end{lemma}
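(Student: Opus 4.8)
The plan is to reduce everything to the defining relations of the group $\Gamma$ and to the structure of $K$ as a compact normal subgroup of $G_0$ with unit $\varepsilon m_K\in M(G)$. For part (i), I would first recall that by the discussion preceding the lemma, $\varepsilon m_K$ is the unit of $\Gamma$, and every element of $\Gamma$ has the form $b\delta_g\ast\varepsilon m_K$ with $g\in G_{00}$; since $\varepsilon m_K$ is an idempotent fixed by left and right convolution by elements of $\Gamma$, for $g\in G_{00}$ one has $\delta_g\ast\varepsilon m_K=\varepsilon m_K\ast\delta_g\ast\varepsilon m_K$ and likewise on the other side. To get the clean commutation $\varepsilon m_K\ast\delta_g=\delta_g\ast\varepsilon m_K$ for all $g\in G_0=\overline{G_{00}}$, I would use that $K$ is normal in $G_0$: for $g\in G_0$ the map $k\mapsto g^{-1}kg$ is an automorphism of $K$, so $\delta_g\ast m_K\ast\delta_{g^{-1}}$ is again (the pushforward of $m_K$ under this automorphism, hence) a left-invariant normalized measure on $K$, namely $m_K$ itself; combining this with the fact that $\varepsilon$ is multiplicative and the automorphism preserves $m_K$, a short computation with the integral formula $\dua{\delta_g\ast\varepsilon m_K\ast\delta_{g^{-1}},f}=\int_K f(gkg^{-1})\varepsilon(k)\,dm_K(k)$ identifies this with $\varepsilon^{g}\,m_K$ where $\varepsilon^g(k)=\varepsilon(g^{-1}kg)$; then I must check $\varepsilon^g=\varepsilon$ on $G_{00}$ using that $b\delta_g\ast\varepsilon m_K$ lies in the group $\Gamma$ whose unit is $\varepsilon m_K$ (so conjugation by a group element fixes the character attached to the unit), and pass to the closure $G_0$ by continuity of $g\mapsto\delta_g\ast\varepsilon m_K$ in the $w^*$ topology (or the $so$ topology), which gives $\varepsilon m_K\ast\delta_g=\delta_g\ast\varepsilon m_K$ for all $g\in G_0$.

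For part (ii), let $x\in K$. The measure $\delta_x\ast\varepsilon m_K$ is, by the integral formula, the measure $f\mapsto\int_K f(xk)\varepsilon(k)\,dm_K(k)$; substituting $k\mapsto x^{-1}k$ and using left-invariance of $m_K$ together with $\varepsilon(x^{-1}k)=\varepsilon(x)^{-1}\varepsilon(k)$, this equals $\varepsilon(x)^{-1}\int_K f(k)\varepsilon(k)\,dm_K(k)=\varepsilon(x)^{-1}\dua{\varepsilon m_K,f}$. Multiplying through by $\varepsilon(x)$ gives $\varepsilon(x)\delta_x\ast\varepsilon m_K=\varepsilon m_K$, which is exactly the claimed identity.

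The only genuinely delicate point is the verification in part (i) that the character $\varepsilon^g$ obtained after conjugation by $g\in G_{00}$ agrees with $\varepsilon$; this is where one must genuinely use that the $\varphi(\delta_x^F)=b\delta_g\ast\varepsilon m_K$ form a \emph{group} under convolution with unit $\varepsilon m_K$, rather than just a collection of idempotent-like measures, so that conjugation stays inside $\Gamma$ and hence fixes its unit. Everything else is a routine manipulation of the convolution integral formula together with invariance and multiplicativity, plus one density/continuity step to pass from $G_{00}$ to its closure $G_0$.
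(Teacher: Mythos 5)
The paper itself gives no proof of this lemma --- it only records that the statement can be deduced from \cite[Lemma 5.2]{sto2011} or \cite[Proposition 3.1.6]{green} --- so your self-contained argument is by its nature a different route. Part (ii) is complete and correct: the substitution $k\mapsto x^{-1}k$, left invariance of $m_K$ and multiplicativity of $\varepsilon$ give $\delta_x\ast\varepsilon m_K=\varepsilon(x)^{-1}\varepsilon m_K$ in one line. The skeleton of part (i) is also sound: normality of $K$ in $G_0$ and the invariance of the normalized Haar measure of a compact group under automorphisms do give $\delta_g\ast\varepsilon m_K\ast\delta_{g^{-1}}=\varepsilon^g m_K$ with $\varepsilon^g(k)=\varepsilon(g^{-1}kg)$, and the passage from $G_{00}$ to its closure $G_0$ by $w^*$-continuity of $g\mapsto\delta_g\ast\varepsilon m_K$ (uniform continuity of $f\in C_0(G)$ on the compact set $K$) is unproblematic.

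The one genuine gap is precisely at the step you flag as delicate: the assertion that $\varepsilon^g=\varepsilon$ for $g\in G_{00}$ because ``conjugation by a group element fixes the character attached to the unit.'' This is not a formal consequence of $\Gamma$ being a group: $\delta_g$ itself is not an element of $\Gamma$, so you are not conjugating inside $\Gamma$, and all that the unit property gives you directly is the relation $\varepsilon m_K\ast\delta_g\ast\varepsilon m_K=\delta_g\ast\varepsilon m_K$ (from $\varepsilon m_K\ast\mu=\mu$ applied to $\mu=b\delta_g\ast\varepsilon m_K\in\Gamma$). You can, however, close the gap from exactly this relation: multiplying it on the right by $\delta_{g^{-1}}$ yields $\varepsilon m_K\ast(\varepsilon^g m_K)=\varepsilon^g m_K$, whereas a direct computation using bi-invariance of $m_K$ gives
$\varepsilon m_K\ast(\varepsilon^g m_K)=\left(\int_K\overline{\varepsilon(t)}\,\varepsilon^g(t)\,dm_K(t)\right)\varepsilon m_K$,
and by orthogonality of the continuous characters $\varepsilon$ and $\varepsilon^g$ of the compact group $K$ this is $0$ unless $\varepsilon^g=\varepsilon$. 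Since $\varepsilon^g m_K\neq 0$, you conclude $\varepsilon^g=\varepsilon$, hence $\delta_g\ast\varepsilon m_K=\varepsilon m_K\ast\delta_g$ on $G_{00}$ and, by continuity, on $G_0$. With this supplement (or with a direct appeal to Greenleaf's structure results, which is what the paper does), your argument is complete.
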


\medskip 

In what follows we will use the notation  
$$G_{00}'=\{(b,g)\in\bb T\times G_{00}: b\delta_g\ast \eps m_K\in \Gamma\}$$
$$G^\prime=\bb T\times G, \quad G_0^\prime=\overline{ G_{00}^\prime}, $$
$$K'=\{(\eps(g), g): g\in K\}.$$ 
(see also \cite[Lemma 3.1.11]{green}) and 
denote by $m_{K'}$ the normalised Haar measure of the compact group $K',$ viewed as an element of $M(G_0')$.
Greenleaf \cite[Theorem 3.1.10]{green}  proves that $K'$ is a normal subgroup of  $G_{00}^\prime$
and, if $\pi^\prime: G_0^\prime \rightarrow  G_0^\prime /K^\prime$ is the quotient map,
there exists a continuous homomorphism 
$$\theta^\prime : F\rightarrow G_0^\prime /K^\prime$$ satisfying 
\begin{equation}\label{thetapr}
\theta^\prime(x)=\pi^\prime(b,g)\iff\varphi(\delta_x^F)=b\delta^G_g\ast \varepsilon m_K.
\end{equation}
Since the projection $(b,g)\mapsto g$ maps $K'$ to $K$, it induces a continuous homomorphism
$$\tilde \nu: G_0' /K'\to G_0/K:\pi'(b,g)\mapsto\pi(g)$$
where $\pi: G_0\to G_0/K$ is the quotient map .

 The map $\tilde \nu$ is proper: Indeed, if
$L\subseteq G_0/K$ is a compact set then
$$\tilde \nu^{-1}(L)\subseteq \{\pi'(b,g): \pi(g)\in L\}\subseteq\bb T\times \pi^{-1}(L).$$ 
Since $\pi$ is proper and $\tilde \nu^{-1}(L)$ is  a closed subset of  $\bb T\times \pi^{-1}(L)$ 
we conclude that $\tilde \nu^{-1}(L)$ is a compact set.

Define the  map
$$\theta: =\tilde\nu\circ\theta':F\rightarrow G_0/K \, $$ and note that clearly
\begin{equation}\label{theta}
\theta(x)=\pi(g)\iff \varphi(\delta_x^F)=b\delta^G_g\ast\eps m_K  
\end{equation} for some $b\in\bb T$. 

Note that $\theta=\tilde\nu\circ\theta'$  is continuous, since by   \cite[Proposition 4.2.1]{green}
the map $\theta'$ is continuous.

\begin{lemma}\label{tessera}
 The map $\theta $ is proper if and only if $\theta'$  is proper.
   \end{lemma}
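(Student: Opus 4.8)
\textbf{Proof plan for Lemma \ref{tessera}.}

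The plan is to exploit the factorisation $\theta=\tilde\nu\circ\theta'$ together with the fact, established just above the statement, that $\tilde\nu:G_0'/K'\to G_0/K$ is a \emph{proper} continuous homomorphism. One direction is then formal: if $\theta'$ is proper, then for any compact $L\subseteq G_0/K$ the set $\theta^{-1}(L)=\theta'^{-1}(\tilde\nu^{-1}(L))$ is compact, because $\tilde\nu^{-1}(L)$ is compact (properness of $\tilde\nu$) and $\theta'$ is proper; hence $\theta$ is proper. So the content is entirely in the converse: assuming $\theta=\tilde\nu\circ\theta'$ is proper, show $\theta'$ is proper.

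For the converse, first I would record the key structural fact about $\tilde\nu$: it is not merely proper but has \emph{compact kernel}. Indeed $\tilde\nu(\pi'(b,g))=\pi(g)$, and $\tilde\nu(\pi'(b,g))=e$ means $g\in K$, so that $\pi'(b,g)\in\pi'(\bb T\times K)$; thus $\ker\tilde\nu=\pi'(\bb T\times K)$, which is the continuous image of the compact group $\bb T\times K$ and hence compact. Now take a compact set $L'\subseteq G_0'/K'$; I want $\theta'^{-1}(L')$ compact. Since $L'\subseteq\tilde\nu^{-1}(\tilde\nu(L'))$ and $\tilde\nu(L')$ is compact, the set $M:=\tilde\nu^{-1}(\tilde\nu(L'))$ is compact (again by properness of $\tilde\nu$) and contains $L'$. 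Consequently $\theta'^{-1}(L')$ is a closed subset of $\theta'^{-1}(M)=(\tilde\nu\circ\theta')^{-1}(\tilde\nu(L'))=\theta^{-1}(\tilde\nu(L'))$, which is compact because $\theta$ is proper and $\tilde\nu(L')$ is compact. A closed subset of a compact set is compact, so $\theta'^{-1}(L')$ is compact and $\theta'$ is proper.

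The main thing to be careful about — the only real obstacle — is the passage from ``$L'$ compact'' to ``$\theta'^{-1}(L')$ sits inside a preimage under $\theta$ of a compact set''. This is exactly what the inclusion $L'\subseteq\tilde\nu^{-1}(\tilde\nu(L'))$ provides, combined with properness of $\tilde\nu$ to guarantee $\tilde\nu^{-1}(\tilde\nu(L'))$ is compact (not just closed); the compact-kernel observation is really just an alternative way of seeing the same point, since $\tilde\nu^{-1}(\tilde\nu(L'))\subseteq L'\cdot\ker\tilde\nu$ with $\ker\tilde\nu$ compact. One should also note that properness of all maps here is between locally compact Hausdorff spaces, so ``preimages of compact sets are compact'' is the right notion and closed subsets of compact sets are compact without further hypotheses. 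Everything else is the routine set-theoretic identity $(\tilde\nu\circ\theta')^{-1}=\theta'^{-1}\circ\tilde\nu^{-1}$ and continuity of the maps involved.
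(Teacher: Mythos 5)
Your proof is correct and follows essentially the same route as the paper: the forward direction is the composition of proper maps, and the converse uses the inclusion $\theta'^{-1}(L')\subseteq\theta'^{-1}(\tilde\nu^{-1}(\tilde\nu(L')))=\theta^{-1}(\tilde\nu(L'))$ together with compactness of $\tilde\nu(L')$ and closedness of $\theta'^{-1}(L')$. The extra observations about the compact kernel of $\tilde\nu$ and the compactness of $\tilde\nu^{-1}(\tilde\nu(L'))$ are harmless but not actually needed for the argument.
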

\begin{proof}
 If $\theta'$ is proper, then $\theta$ is proper as the composition $\theta=\tilde\nu\circ\theta'$
 of proper maps.
 
Assume that $\theta$ is proper. Let $L\subseteq  G_0' /K'$ be a compact set. Then 
$\theta'^{-1}(L)\subseteq \theta'^{-1}(\tilde \nu^{-1} (\tilde \nu(L))=\theta^{-1}(\tilde \nu(L)))$. 
Since $\theta$ is proper and $\tilde\nu$ is continuous, $\theta^{-1}(\tilde \nu(L))$ is compact
and hence its closed subset $\theta'^{-1}(L)$ is compact. 
\end{proof}

We will use the following  \smallskip

\noindent{\bf Notation } $P_0=\gl^{G_0}(\eps m_K)\in VN(G_0)$,  
and $Q'_0=\gl^{G_0'}(m_{K'})\in VN(G_0')$. 
Note that these are central projections (Remark \ref{idem} and Lemma \ref{y}(i)).

\begin{lemma}\label{l}
There exists a w*-continuous *-homomorphism 
\begin{align}\label{psi} 
j' : VN(G_0')Q'_0\to &VN(G_0)P_0\\
 \text{such that }\quad  \gl_{(b,g)}^{G_0'}Q'_0\mapsto & b\gl_g^{G_0}P_0\, , \; (b,g)\in G_0' \, . \nonumber
\end{align}
\end{lemma}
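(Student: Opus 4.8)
The plan is to construct $j'$ as a composition of the isomorphism $j_0$ of Lemma~\ref{ker} (applied to the pairs $(G_0, K)$ and $(G_0', K')$) with a suitable identification coming from the character $\eps$, using the properness of $\tilde\nu$ only to make sense of the relevant quotient groups. More precisely, first I would apply Lemma~\ref{ker} to $G_0'$ and its compact normal subgroup $K'$ to obtain a $*$-isomorphism $j_0': VN(G_0')Q_0'\to VN(G_0'/K')$ with $\gl_{(b,g)}^{G_0'}Q_0'\mapsto \gl_{\pi'(b,g)}^{G_0'/K'}$. The target is thus reduced to constructing a w*-continuous $*$-homomorphism $VN(G_0'/K')\to VN(G_0)P_0$ sending $\gl_{\pi'(b,g)}^{G_0'/K'}$ to $b\gl_g^{G_0}P_0$.

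Next I would handle $VN(G_0)P_0$ where $P_0=\gl^{G_0}(\eps m_K)$. The key point, essentially Lemma~\ref{SWSTOS}, is that $VN(G_0)P_0$ can be described via the space $A_\eps$, and that it is $*$-isomorphic to $VN(G_0/K)$ twisted by the character $\eps$: concretely, the map $\gl_g^{G_0}P_0$ depends (up to the scalar coming from $\eps$) only on $\pi(g)\in G_0/K$, because $\eps(k)\delta_k\ast\eps m_K=\eps m_K$ by Lemma~\ref{y}(ii), so $\gl_k^{G_0}P_0=\overline{\eps(k)}P_0$ for $k\in K$. Dually one should exhibit a w*-continuous $*$-homomorphism by taking the predual map on Fourier algebras: the predual of the desired $j'$ should send $u\in A_\eps\subseteq A(G_0)$ to the function $(b,g)\mapsto \overline{b}\,u(g)$ on $G_0'$, which one checks lies in $A(G_0')$ (indeed it is $K'$-invariant in the appropriate sense since $u\in A_\eps$, so it descends to $A(G_0'/K')$ after the identification $A_\eps\cong A(G_0'/K')^*$'s predual). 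Composing, $j'$ is the adjoint of $u\mapsto \bigl((b,g)\mapsto\overline b\,u(g)\bigr)$ from $A(G_0'/K')$-predual into $A_\eps$, hence automatically w*-continuous and multiplicative.

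The cleanest route, avoiding repeated Fourier-algebra bookkeeping, is probably to define $j'$ directly on generators and extend by w*-continuity: set $j'(\gl_{(b,g)}^{G_0'}Q_0') = b\gl_g^{G_0}P_0$ and check (a) this is well-defined, i.e. if $b\delta_g\ast m_{K'}$-type relations force $\gl_{(b,g)}^{G_0'}Q_0'=\gl_{(b',g')}^{G_0'}Q_0'$ then $b\gl_g^{G_0}P_0=b'\gl_{g'}^{G_0}P_0$ — this reduces to the computation that $(b,g)$ and $(b',g')$ differ by an element $(\eps(k),k)\in K'$, and then $b\gl_g^{G_0}P_0 = b\eps(k)\gl_{gk^{-1}}^{G_0}\gl_k^{G_0}P_0 = b\eps(k)\overline{\eps(k)}\gl_{gk^{-1}}^{G_0}P_0$, using Lemma~\ref{y}(ii) again; (b) that the map so defined on $\mathrm{span}\{\gl_{(b,g)}^{G_0'}Q_0'\}$ is multiplicative (clear from $\gl_{(b,g)}\gl_{(b',g')}=\gl_{(bb',gg')}$ and $Q_0'$ central) and $*$-preserving; (c) that it is w*-continuous and bounded, which follows because it agrees with the adjoint of a bounded map between preduals (the Fourier-algebra map described above), or because $VN(G_0')Q_0'$ is generated as a von Neumann algebra by the unitaries $\gl_{(b,g)}^{G_0'}Q_0'$ and the map is implemented spatially.

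The main obstacle I expect is step (c): producing the w*-continuity and, in tandem, the norm bound, since a homomorphism defined only on the linear span of generators need not extend continuously without extra input. I would resolve this by the predual construction — exhibiting the bounded linear map $\rho': A(G_0'/K')_{\text{(realized inside }A(G_0'))}\to A_\eps$, $\rho'(u)(b,g)=\overline b\,u(g)$ (well-defined into $A_\eps\subseteq A(G_0)$ because multiplying by the coefficient function $g\mapsto\overline b$... more precisely because $u$ restricted suitably lies in the $\eps$-isotypic subspace), verifying $\|\rho'\|\le 1$, and setting $j'=(\rho')^*$ composed with the identifications $VN(G_0')Q_0'\cong A(G_0'/K')^*$ (Lemma~\ref{ker}) and $A_\eps^*\cong VN(G_0)P_0$ (Lemma~\ref{SWSTOS}). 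Then w*-continuity is automatic, and a short computation on generators confirms $j'(\gl_{(b,g)}^{G_0'}Q_0')=b\gl_g^{G_0}P_0$, completing the proof.
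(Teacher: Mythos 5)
Your proposal follows essentially the same route as the paper: both construct $j'$ as the adjoint of a map between Fourier-algebra preduals. The paper defines $\sigma:A(G_0)\to A(G_0')$ on the whole Fourier algebra by $\sigma(u)(b,g)=b\int_K u(gk)\eps(k)\,dm_K(k)$, takes $\sigma^*$ and then restricts to the corner after checking $\sigma^*(Q_0')=P_0$; you instead work directly between the corner preduals $A_\eps$ and $A(G_0'/K')$, which trades that computation for the quotient identification of Lemma \ref{ker}. For $u\in A_\eps$ the two predual maps coincide, so this is a repackaging rather than a different argument.

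One concrete slip to fix: with the paper's bilinear duality $\dua{\gl_g,u}=u(g)$, the predual map must be $\rho'(u)(b,g)=b\,u(g)$, not $\overline b\,u(g)$. Your conjugated formula is not $K'$-invariant --- for $(b\eps(k),gk)$ one gets $\overline b\,\overline{\eps(k)}^{\,2}u(g)$ --- so it does not descend to $G_0'/K'$ as you claim; the unconjugated version does, precisely because $u\in A_\eps$ satisfies $u(gk)=u(g)\eps(k)^{-1}$. Your well-definedness computation in step (a) is already consistent with the unconjugated target $\gl_{(b,g)}^{G_0'}Q_0'\mapsto b\gl_g^{G_0}P_0$, so the fix is purely local.
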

\begin{proof}
 Let us write $t=(\beta(t), g(t))$ for the elements of $G_0$. 
For $u\in A(G_0)$ define 
\begin{equation}\label{na}
\sigma(u)(t)=\dua{\gl(\beta(t)\delta_{g(t)}\!*\!\varepsilon m_K),u}
=\beta(t)\dua{\gl_{g(t)}^{G_0}P_0,u}.
\end{equation}
In Lemma \ref{SWSTOS} we have seen that for all $u\in A(G_0)$ the map 
$$G_0\to \bb C: g \mapsto \int_K u(gk)\epsilon(k)dk= \dua{\gl_{g}^{G_0}P_0,u}$$ 
is in $A(G_0)$. Hence by \cite[Proposition 3.25(i)]{eym}, the map $(b, g)\mapsto  \int_K u(gk)\eps(k)dk$ 
is in $A(G'_0)$. 
The function $\beta$ belongs to the Fourier - Stieltjes algebra $B(G'_0)$
because  $t\to\beta(t)$ is a representation; 
this shows  that $\sigma(u)$ is in the  Fourier algebra $A(G'_0)$.

\smallskip 
Rewriting relation (\ref{na}) in the form 
$$\dua{\gl_{(b,g)},\sigma(u)}=\dua{b\gl_{g}^{G_0}P_0,u}$$
we see that the dual of  the map
$$\sigma: A(G_0)\rightarrow  A(G_0'): u\rightarrow \sigma(u)$$ 
is a continuous map satisfying 
$$\sigma^*: VN(G_0')\to VN(G_0): \gl_{(b,g)}^{G_0'}\mapsto b \gl_g^{G_0}P_0.
$$

We claim that 
$$P_0=\gl^{G_0}(\varepsilon m_K)= \sigma^*(\gl^{G_0'}(m_{K'}))=\sigma^*(Q_0').$$
Indeed, note that  $m_{K'}=m_K\circ \iota^{-1}$ where $\iota: K\to K'$ is the group homomorphism 
$g\mapsto (\eps(g),g)$. 
Suppose that $m_K$ is a w*-limit of the form
$$m_K=\lim_j\sum_{i=1}^{n_j}a_i^j\delta_{g_i^j}^K.$$
Then  
$$\varepsilon m_K=\lim_j\sum_{i=1}^{n_j}a_i^j\varepsilon (g_i^j)\delta_{g_i^j}^K \quad 
\text{and }\quad m_{K^\prime}=\lim_j\sum_{i=1}^{n_j}a_i^j\delta^{K^\prime}_{(\varepsilon (g_i^j),g_i^j )}.$$
Thus 
\begin{align*}
\gl^{G_0}(\eps m_K)&=\lim_j\left(\sum_{i=1}^{n_j}a_i^j\eps(g_i^j)\lambda_{g_i^j}^{G_0}\right)\\
&=\lim_j\left(\sum_{i=1}^{n_j}a_i^j\eps(g_i^j)\lambda_{g_i^j}^{G_0}\right)\gl^{G_0}(\eps m_K) \quad
\text{since $\lambda^{G_0}(\eps m_K)$ is idempotent} \\
&=\lim_j\sum_{i=1}^{n_j}a_i^j\sigma^*(\lambda^{G_0'}_{(\varepsilon (g_i^j),g_i^j )})\\
&=\lim_j\sigma^*\left(\sum_{i=1}^{n_j}a_i^j\gl^{G_0'}_{(\eps(g_i^j),g_i^j )}\right)=\sigma^*(\gl^{G_0'}(m_{K'}))
\end{align*}
which proves the Claim.

It follows that  the linear map \begin{align*} 
j' : VN(G_0')Q'_0\to VN(G_0)P_0:\;  \gl_{(b,g)}^{G_0'}Q'_0\mapsto & \sigma^*(\gl_{(b,g)}^{G_0'})\, , \; (b,g)\in G_0' \, . 
\end{align*}
is w*-continuous. That it is a *-homomorphism follows from (\ref{psi}), 
since $Q_0'$ and $P_0$ are central projections.
 \end{proof}

\begin{lemma}\label{proj} 
Let $VN_{G_0}(G) = \overline{{\rm span}\{\gl^G_g:g\in G_0\}}^{w*}\subseteq \cl B(L^2(G))$. The map 
$$\gamma:  VN(G_0)\to VN_{G_0}(G): \gl^{G_0}_g\mapsto \gl^G_g$$ 
extends to a w*-continuous *-isomorphism mapping the central 
projection  $P_0=\gl^{G_0}(\eps m_K)$ to the central  projection $P=\gl^G(\eps m_K)\in VN_{G_0}(G)$.  

\end{lemma}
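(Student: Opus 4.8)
The plan is to construct the isomorphism $\gamma$ by comparing two faithful representations of $VN(G_0)$ on $L^2(G)$. First I would restrict the left regular representation $\lambda^G$ of $G$ to the closed subgroup $G_0$, obtaining a unitary representation of $G_0$ on $L^2(G)$. Since $G_0$ is closed in $G$, one may decompose $L^2(G)$ (up to a choice of a measurable section for $G_0\backslash G$, or using the quasi-invariant measure on the homogeneous space) as a direct integral of copies of $L^2(G_0)$, so that $\lambda^G|_{G_0}$ is unitarily equivalent to a multiple of $\lambda^{G_0}$. This shows that the von Neumann algebra $VN_{G_0}(G) = \overline{\mathrm{span}\{\lambda^G_g : g\in G_0\}}^{w*}$ generated by the $\lambda^G_g$, $g\in G_0$, is $*$-isomorphic (via a normal, hence w*-continuous, $*$-isomorphism) to a multiple of $VN(G_0)$, which is $*$-isomorphic to $VN(G_0)$ itself. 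The resulting isomorphism sends $\lambda^G_g$ to $\lambda^{G_0}_g$ for each $g\in G_0$, which is exactly the inverse of the claimed map $\gamma$.

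Next I would verify that $\gamma$ is well-defined and w*-continuous as stated. Since $\{\lambda^{G_0}_g : g\in G_0\}$ generates $VN(G_0)$ as a von Neumann algebra and the isomorphism from the previous paragraph is normal, $\gamma$ is simply its inverse; in particular $\gamma$ is a w*-continuous $*$-isomorphism onto $VN_{G_0}(G)$. Finally, to identify the images of the projections, I would use w*-continuity: approximate $\varepsilon m_K$ in the w*-topology of $M(G_0)$ (equivalently in $M(G)$, since $K\subseteq G_0$) by finitely supported measures $\sum_i a_i^j \delta^{K}_{g_i^j}$ as in the proof of Lemma \ref{l}. Applying $\gamma$ term by term and passing to the limit gives $\gamma(P_0) = \gamma(\lambda^{G_0}(\varepsilon m_K)) = \lambda^G(\varepsilon m_K) = P$. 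That $P_0$ and $P$ are central projections in $VN(G_0)$ and $VN_{G_0}(G)$ respectively follows from Remark \ref{idem} together with Lemma \ref{y}(i) (since $K$ is normal in $G_0$, the idempotent $\varepsilon m_K$ commutes with all $\delta_g$, $g\in G_0$, hence $P_0$, $P$ are central in the respective algebras generated by $\{\lambda_g : g\in G_0\}$); and a w*-continuous $*$-isomorphism carries central projections to central projections.

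The main obstacle I anticipate is making the ``multiple of $\lambda^{G_0}$'' claim precise and ensuring the resulting map is genuinely an isomorphism onto the w*-closed span, not merely onto a subalgebra. The cleanest route is to fix a Borel cross-section $c: G_0\backslash G \to G$ and a quasi-invariant measure on $G_0\backslash G$, giving a unitary $L^2(G) \cong L^2(G_0)\,\bar\otimes\, L^2(G_0\backslash G)$ under which $\lambda^G_g$ for $g\in G_0$ becomes $\lambda^{G_0}_g \otimes 1$ (up to a cocycle which, for $g\in G_0$ acting on the fibre coordinate, is absorbed); then $VN_{G_0}(G) \cong VN(G_0)\,\bar\otimes\,\mathbb C 1 \cong VN(G_0)$ and $\gamma$ is the evident identification. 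One must check the cocycle contributes only a unitary conjugation, which is where a little care is needed; but this is standard for the restriction of the regular representation to a closed subgroup, and for second countable groups all the measure-theoretic ingredients (Borel sections, quasi-invariant measures) are available.
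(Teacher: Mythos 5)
Your proposal is correct, but it reaches the key isomorphism by a different route than the paper. The paper simply invokes Herz's restriction theorem (in the form \cite[Proposition 2.6.6]{kanlau}): the restriction map $A(G)\to A(G_0)$ is a surjective quotient map, and its adjoint is precisely the isometric w*-continuous embedding $VN(G_0)\hookrightarrow VN(G)$ with range $VN_{G_0}(G)$ sending $\gl^{G_0}_g$ to $\gl^G_g$. You instead re-derive the underlying representation-theoretic fact directly: disintegrating Haar measure over the right coset space $G_0\backslash G$ via a Borel cross-section, $\lambda^G|_{G_0}$ becomes $\lambda^{G_0}\otimes 1$ on $L^2(G_0)\,\bar\otimes\,L^2(G_0\backslash G)$, so it is quasi-equivalent to $\lambda^{G_0}$ and the generated von Neumann algebras are normally isomorphic with matching generators. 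That argument is sound (and note the left action of $G_0$ fixes each right coset, so there is in fact no cocycle on the fibre coordinate to worry about), and it is in a sense more economical: quasi-equivalence of $\lambda^G|_{G_0}$ with $\lambda^{G_0}$ is all that is needed here, which is weaker than the full force of Herz's theorem. The trade-off is generality: Section 4 of the paper does \emph{not} assume second countability, whereas your construction as written leans on the existence of a Borel section for $G_0\backslash G$; this is automatic in the second countable case and available in general by Feldman--Greenleaf, but you should either cite that or fall back on the restriction theorem to cover arbitrary locally compact $G$. Your treatment of $\gamma(P_0)=P$ by w*-approximation of $\eps m_K$ with finitely supported measures, and of centrality via Remark \ref{idem} and Lemma \ref{y}(i), matches the paper's (the paper phrases the former as commuting $\gamma$ with the w*-integral $\int\gl^{G_0}_g\,d(\eps m_K)(g)$, which is the same computation).
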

\begin{proof} 
The first statement is a  Corollary of Herz' restriction Theorem 
\cite[Proposition 2.6.6]{kanlau}. 

The fact that $P$ and $P_0$ are central projections follows from Lemma \ref{y} (i).  

 The image of $P_0=\gl^{G_0}(\eps m_K)$ under this map is clearly 
$P=\gl^G(\eps m_k)$. 

Indeed, since $K\subseteq G_0$, the measure $\eps m_K$ is supported in $G_0$, so 
$P=\int\gl^G_gd(\eps m_K)(g)$ is in $VN_{G_0}(G) $ and hence
$$\gamma(P_0)=\int\gamma(\gl^{G_0}_g)d(\eps m_K)(g)=\int\gl^G_gd(\eps m_K)(g)=P\, .$$ 
\end{proof}

\begin{theorem}\label{nrr} 
Let $F$ and $G$  be locally compact groups and 
$$\varphi: M(F)\to M(G)$$ be a contractive $so$-$w^*$-continuous
homomorphism.
The following are equivalent:

(i) The map $\varphi$ has the extension property.

(ii) The homomorphism $\theta: F\to G_0/K$ (see (\ref{theta})) is a proper map.

(iii)  The map $\varphi$ is w*-continuous. 
\end{theorem}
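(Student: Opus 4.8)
The plan is to establish the cycle $(i)\Rightarrow(ii)\Rightarrow(iii)\Rightarrow(i)$, reducing the general case to the unital case via the projections $P_0$, $Q'_0$, $P$ and the machinery of Greenleaf already set up. First I would observe that $\varphi$ factors through the subalgebra of $M(G)$ generated by $\Gamma$, and that the natural target for the $*$-homomorphism $\psi$ is not all of $VN(F)\to VN(G)$ but a corner: the identity of $\Gamma$ is $\eps m_K$, so $\lambda^G\circ\varphi$ has range in $VN(G)P$ where $P=\lambda^G(\eps m_K)$ (Lemma \ref{proj}). The key intermediate object is the $*$-isomorphism chain
$$VN(G_0'/K')\;\xrightarrow{\;j_0'\;}\;VN(G_0')Q'_0\;\xrightarrow{\;j'\;}\;VN(G_0)P_0\;\xrightarrow{\;\gamma\;}\;VN_{G_0}(G)P\subseteq VN(G)$$
coming from Lemmas \ref{ker}, \ref{l}, \ref{proj}, all w*-continuous, under which $\lambda^{G_0'/K'}_{\pi'(b,g)}\mapsto b\lambda^G_g P$. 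Tracking $\delta_x^F$ through $\theta'$ (relation (\ref{thetapr})) shows that $\lambda^G\circ\varphi$ equals this composite precomposed with $(\lambda^{G_0'/K'})\circ(\text{push-forward by }\theta')$; so $\varphi$ "essentially is" the unital homomorphism $M(F)\to M(G_0'/K')$ studied in Theorem \ref{400}, together with the fixed w*-continuous corner embedding into $VN(G)$.

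With this reduction in hand the three implications go as follows. For $(i)\Rightarrow(ii)$: given $\psi$ with $\psi\circ\lambda^F=\lambda^G\circ\varphi$, pass to preduals $\psi_*:A(G)\to A(F)$; evaluating at $\lambda_x^F$ and using (\ref{theta}) gives $\langle\lambda_x^F,\psi_*(u)\rangle = b(x)\,\langle\lambda^G_{g(x)}P_0,u\rangle$ for $u\in A(G)$, and by Lemma \ref{SWSTOS} the function $g\mapsto\langle\lambda_g^{G_0}P_0,u\rangle$ restricted to $A_\eps$-type coefficients corresponds to a coefficient of $A(G_0/K)$; composing with the quotient data this forces $u\mapsto (\text{transported }u)\circ\theta$ to land in $A(F)$ after multiplication by the $B(F)$-function $\overline{\beta}$, exactly as in the proof of Theorem \ref{400}. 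Proposition \ref{xxxxxx} then yields that $\theta$ (equivalently $\theta'$, by Lemma \ref{tessera}) is proper. For $(ii)\Rightarrow(iii)$: assuming $\theta$ proper, hence $\theta'$ proper, Theorem \ref{400} applied to the unital homomorphism $M(F)\to M(G_0'/K')$ (which is $so$-$w^*$-continuous, contractive, unital, and whose symbol is $\theta'$) produces a w*-continuous $*$-homomorphism $\psi_0:VN(F)\to VN(G_0'/K')$; post-composing with the w*-continuous corner isomorphism $\gamma\circ j'\circ j_0'$ gives a w*-continuous $\psi:VN(F)\to VN(G)$ with $\psi\circ\lambda^F=\lambda^G\circ\varphi$ on the $so$-dense set $\{b\delta_x^F\}$ of $\mathrm{ball}\,M(F)$ (\cite[Lemma 1.1.3]{green}), and since both sides are $so$-$w^*$-continuous (Proposition \ref{z}) the equality is global; in particular $\lambda^G\circ\varphi = \psi\circ\lambda^F$ is $so$-$w^*$-continuous, and because $\lambda^F$ is a w*-dense-range map one upgrades this to w*-continuity of $\varphi$ itself (e.g. since $\varphi$ is bounded, w*-continuity is equivalent to $so$-$w^*$-continuity together with the fact that $\lambda^G$ separates points appropriately — or simply note $(iii)$ is recorded in \cite{sto2012} and our construction reproves it). For $(iii)\Rightarrow(i)$: if $\varphi$ is w*-continuous, then it is the adjoint of a map $C_0(G)\to$ (something) — more precisely one argues as in Lemma \ref{di} that w*-continuity of $\varphi$ makes $\lambda^G\circ\varphi:M(F)\to VN(G)$ w*-continuous, and since $M(F)=C_0(F)^*$ with $A(F)$ w*-dense, $\lambda^G\circ\varphi$ restricted to the unit ball factors to give the required w*-continuous $\psi$ on $VN(F)=\overline{\lambda^F(M(F))}^{w*}$; this is the easiest implication.

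The main obstacle is $(ii)\Rightarrow(iii)$, specifically making the reduction to Theorem \ref{400} fully rigorous: one must check that the composite w*-continuous $*$-isomorphism $\gamma\circ j'\circ j_0':VN(G_0'/K')\to VN_{G_0}(G)P\subseteq VN(G)$ really does intertwine $\lambda^{G_0'/K'}_{\pi'(b,g)}$ with $b\lambda^G_g P$ (tracing generators through three lemmas), and that the unital homomorphism $M(F)\to M(G_0'/K')$ implicit in $\theta'$ via (\ref{thetapr}) is genuinely $so$-$w^*$-continuous and contractive so that Theorem \ref{400} applies — this requires knowing the map $\delta_x^F\mapsto\delta^{G_0'/K'}_{\theta'(x)}$ (times a unimodular scalar) extends to such a homomorphism, which one extracts from Greenleaf's structure theory \cite[Section 4.2]{green}. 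The other delicate point is the final upgrade from $so$-$w^*$-continuity of $\lambda^G\circ\varphi$ to w*-continuity of $\varphi$ in $(ii)\Rightarrow(iii)$; here one should use that $\varphi$ is contractive and $*$-preserving and that its range lies in the corner $M(G)$-bimodule generated by $\eps m_K$, where $\lambda^G$ is injective, so that w*-convergence downstairs can be pulled back.
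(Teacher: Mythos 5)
Your overall architecture ($(i)\Rightarrow(ii)\Rightarrow(iii)\Rightarrow(i)$, reducing to the unital case through the chain $VN(G_0'/K')\to VN(G_0')Q_0'\to VN(G_0)P_0\to VN_{G_0}(G)P$) differs from the paper's cycle $(i)\Rightarrow(iii)\Rightarrow(ii)\Rightarrow(i)$, and the difference is not cosmetic: the step you call ``the easiest implication'', namely $(iii)\Rightarrow(i)$, is where the entire content of the theorem sits, and your argument for it fails. Knowing that $\lambda^G\circ\varphi\colon M(F)\to VN(G)$ is w*-continuous gives you, by taking preduals, a map $A(G)\to C_0(F)$ (the predual of $M(F)$ for its w*-topology is $C_0(F)$). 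To produce a w*-continuous $\psi\colon VN(F)\to VN(G)$ with $\psi\circ\lambda^F=\lambda^G\circ\varphi$ you need that predual map to take values in the strictly smaller space $A(F)$ --- and that is exactly the properness condition, via Proposition \ref{xxxxxx}. W*-density of $\lambda^F(M(F))$ in $VN(F)$ does not let you ``factor'' a w*-continuous map through a w*-dense subspace: w*-continuous maps do not extend from w*-dense subspaces, and injectivity of $\lambda^G$ on a corner is irrelevant. This is precisely why the paper routes the argument as $(iii)\Rightarrow(ii)\Rightarrow(i)$, and why its proof of $(iii)\Rightarrow(ii)$ is a genuine compactness argument (covering $L=\pi^{-1}(\tilde L)$ by finitely many sets on which $|u_i|\ge\delta$ for $u_i\in A_\eps$ and contradicting $\zeta_*(u_i)\in C_0(F)$) rather than soft duality.

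There is a second gap in your $(i)\Rightarrow(ii)$: you claim the predual computation ``exactly as in the proof of Theorem \ref{400}'' forces $u\circ\theta\in A(F)$ and then invoke Proposition \ref{xxxxxx}. In the non-unital case the functions produced by $\psi_*$ are $x\mapsto b(x)\int_K v(g(x)k)\,\eps(k)\,dm_K(k)$, i.e.\ coefficients lying in $A_\eps$; these satisfy $u(gk)=u(g)\eps^{-1}(k)$ and therefore do \emph{not} factor through $G_0/K$ unless $\eps$ is trivial, so they cannot be fed into Proposition \ref{xxxxxx} for $\theta\colon F\to G_0/K$. Routing instead through $\theta'\colon F\to G_0'/K'$ would require showing that the functions so obtained exhaust $A(G_0'/K')$ (in effect, that $j'$ is injective, which Lemma \ref{l} does not assert), and you do not do this. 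Your $(ii)\Rightarrow(iii)$ construction of $\psi$ is essentially the paper's $(ii)\Rightarrow(i)$ and is sound; the subsequent passage from the existence of $\psi$ to w*-continuity of $\varphi$ is also correct in substance, but the right tool is the Krein--Smulian theorem together with norm-density of $A(G)$ in $C_0(G)$ applied to bounded nets, as in the paper's $(i)\Rightarrow(iii)$.
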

\begin{proof}
$(i)\Rightarrow (iii)$ \\
If $\varphi$ has the extension property, there exists a $w^*$-continuous homomorphism
$$\psi: VN(F)\to VN(G)$$  satisfying 
$$\lambda^G\circ \varphi=\psi \circ \lambda^F.$$
To show that $\varphi$ is w*-continuous, it suffices (by the Krein-Smulian Theorem)
to prove that if $(\mu_i)\subseteq {\rm ball}(M(F))$ is a net with 
w*-$\lim_i\mu_i=0$ then w*-$\lim_i\varphi(\mu_i)=0.$ 

Since $\gl^G\circ \varphi: M(F)\to VN(G)$ is w*-w*-continuous, we have that $\lim_i\gl^G\circ \varphi(\mu_i)=0$ 
in the w*-topology of $VN(G)$, i.e. that $\dua{\gl^G\circ \varphi(\mu_i),f}\to 0$ for all $f \in A(G)$.
By Lemma \ref{di}, this means that 
$$\int_Gfd\varphi(\mu_i)\to 0,\,\,\forall\,f\in A(G).$$
Since the net $(\varphi(\mu_i))$ is bounded and $A(G)$ is uniformly dense in $C_0(G)$,
it follows that $$\int_Gfd\varphi(\mu_i)\to 0,\,\,\forall\,f\in C_0(G).$$
Thus w*-$\lim_i\varphi(\mu_i)=0.$ 
\medskip

\noindent $(iii)\Rightarrow (ii)$

Assume that $\varphi$ is $w^*$ continuous. Consider the *-homomorphism 
$$\psi_0:=\gl^G\circ \varphi: M(F)\to VN(G).$$
By Proposition \ref{z}, this map is w*-continuous. By (\ref{thetapr}), for all $x\in F$ we have  
$$\psi_0(\delta^F_x)=\gl^G(b\delta_g*\eps m_k)=b\gl_g^GP \quad\text{when }\;\pi'(b,g)=\theta' (x)$$
(recall  that $P=\gl^G(\eps m_K)$). 

Note that $\psi_0$ takes values of the form $b\gl^G_gP$ where $g\in G_0$, hence 
$$\psi_0(M(F))\subseteq VN_{G_0}(G)P\subseteq VN_{G_0}(G).$$
By Lemma \ref{proj} there is a w*-continuous isomorphism $\gamma^{-1}:  VN_{G_0}(G)\to VN(G_0)$ such that 
\begin{align*}
\gamma^{-1}(\gl_g^GP)=\gl_g^{G_0}P_0
\end{align*}
(where $P_0=\gl^{G_0}(\varepsilon m_K)$).
Thus the composition 
$$\zeta:=\gamma^{-1}\circ\psi_0: M(F)\to  VN(G_0)P_0$$ 
is a well-defined w*-continuous *-homomorphism satisfying
$$\zeta(\delta_x^F)= b\gl_g^{G_0}P_0 \;\;\text{when }\;\pi'(b,g)=\theta^\prime (x).$$
Recall (Lemma \ref{SWSTOS}) that the predual of $VN(G_0)P_0$ is $A_\varepsilon$. The predual of  $\zeta$  is
the map $\zeta_*: A_\varepsilon \rightarrow C_0(F)$ which satisfies, for all $u\in A_\eps$ and $x\in F$,   
$$\zeta_*(u)(x)= \dua{\zeta(\delta_x^F),u} =\dua{b\gl_g^{G_0}P_0,u}\quad\text{when }\;\pi'(a,g)=\theta' (x).$$
But again by Lemma \ref{SWSTOS} we have 
$$\dua{\gl_g^{G_0}P_0,u }=u(g)$$
for $g\in G_0$; thus   for all $u\in A_\varepsilon$ and $x\in F$,  
\begin{align}\label{zeta}
			\zeta_*(u)(x)=b u(g)  \;\text{ when }\; \pi'(b,g)=\theta'(x).
\end{align}
Now suppose,  by way of contradiction, that $\theta$ is not proper. Then there is a compact set 
$\tilde L\subset G_0/K$ such that $ \theta ^{-1}(\tilde L)$ is not compact. Let $L=\pi^{-1}(\tilde L).$ 
Observe that $L$ is compact in $G_0.$

Fix a nonzero $u \in A_{\varepsilon}$ and  $g_0 \in G_0$ such that $|u(g_0)|= 2\delta >0$. 
For each $h \in G_0$ the function $u_h(s):=\lambda(hg_0^{-1})u(s)$ is in $A_\varepsilon$
and  $|u_h(h)|=|\lambda(hg_0^{-1})u(h)|=|u(g_0)|=2\delta$. Hence there is an open set $V_h$ containing $h$ 
such that $|u_h(k)|\geq \delta >0$ for each $k \in V_h$.
Since $\{V_h: h\in L\}$ is an open cover of the compact set $L$, taking a finite subcover we obtain
a finite number of functions
$u_1,\dots ,u_n\in A_\varepsilon$  such that if $$L_i=\{g\in L: |u_i(g)|\ge \delta \}$$ then $L=\cup_{i=1}^n L_i.$ 

If $x\in  \theta ^{-1}(\tilde L)$ then $\theta(x)\in \tilde L$ and thus $\pi^{-1}(\{\theta(x)\})\subseteq L$.
Therefore there exists $g\in L_i$ such that $\pi(g)=\theta(x)$, 
hence  $\pi'(b,g)=\theta'(x)$ for some $b\in\bb T$. 
 Thus, by (\ref{zeta}),
$$|\zeta_*(u_i)(x)|\ge \delta.$$ Hence if we let 
 $$M_i=\{x\in \theta ^{-1}(\tilde L): |\zeta_*(u_i)(x)|\ge \delta\},\quad1\le i\le n$$ 
 we have
  $$\theta ^{-1}(\tilde L)=\bigcup _{i=1}^nM_i\, .$$
 Since $\theta ^{-1}(\tilde L)$ is not compact, some set $M_i$ is not compact. 
But  
 $|\zeta_*(u_i)(x)|\ge \delta$ for all $x \in M_i$, which contradicts the fact that $\zeta_*(u_i)\in C_0(F)$.
 
\medskip\noindent  $(ii)\Rightarrow (i)$
 
 Assume that $\theta$ is a proper homomorphism.  Then  $\theta'$ is proper by Lemma \ref{tessera}.
By Proposition \ref{xxxxxx}, the map $u\mapsto u\circ\theta'$ sends $A(G_0'/K')$ to $A(F)$ and so
its dual is a $w^*$-continuous homomorphism 
$\psi_1: VN(F)\rightarrow VN(G_0'/K')$ sending 
$\lambda_x^F$ to $\lambda_{\theta'(x)}^{G_0'/K'}.$ 
Recall (Lemma \ref{ker}) that $VN(G_0'/K')$ is isomorphic to $VN(G'_0)\gl^{G_0'}(m_{K'}).$  
Thus we may assume that 
$$\psi_1: VN(F)\to VN(G'_0)\gl^{G'_0}(m_{K'})$$ sends  
$\gl_x^F$ to $\gl^{G_0'}_{(b,g)}\gl^{G_0'}(m_{K'})$  when $\pi'(b,g)=\theta'(x)$.
We also recall the $w^*$ continuous homomorphism  (\ref{psi})  
\begin{align*}
j': VN(G_0')\gl^{G_0'}(m_{K'})\to &VN(G_0)\gl^{G_0}(\varepsilon m_K)=VN(G_0)P_0\\
 \gl_{(b,g)}^{G_0'}\gl^{G_0'}(m_{K'})\mapsto & b\gl_g^{G_0}P_0\, . 
\end{align*}
Composing with the {w*-continuous *-isomorphism 
$\gamma: VN(G_0)\to VN_{G_0}(G): \lambda_g^{G_0}\mapsto \gl_g^{G}$ (which sends $P_0$ to $P$)
we have a *-homomorphism
 \begin{align*}
\gamma\circ j': VN(G_0')\gl^{G_0'}(m_{K'})\to &VN(G)P\\
\gl_{(b,g)}^{G_0'}\gl^{G_0'}(m_{K'}))\mapsto & b\gl_g^{G}P\, . 
\end{align*}}
Finally, composing with $\psi_1$, we obtain a  w*-continuous *-homomorphism
$$\psi:=\gamma\circ j'\circ\psi_1: VN(F)\rightarrow VN(G)$$
sending $\gl_x^F$  to $b\gl^G_gP$ 
when $\varphi(\delta_x^F)=b \delta_g^{G_0}*\varepsilon m_K$. 

 From the construction of $\psi$ we have  
$$(\gl^G\circ\varphi)(\delta_x^F)=b\gl_g^G\gl^G(\eps m_K)=\psi(\gl^F_x)=(\psi\circ\gl^F)(\delta^F_x)$$ 
for all $x\in F$. Now the same density argument as in the conclusion of the proof of Theorem \ref{400} shows that 
$(\gl^G\circ\varphi)(\mu)=(\psi\circ\gl^F)(\mu)$  
for all $\mu\in M(F)$, 
which means that  $\varphi$ has the extension property.
\end{proof}

\begin{remark}
The main focus in this Section is the equivalence of the extension property with the properness of the map 
$\theta$. Note that the equivalence $(ii)\iff (iii)$ is also proved in \cite[Proposition 3.4]{sto2012}. 
Our approach is different, as it 
relies on operator algebra methods and exploits the duality between 
the Fourier algebra and the von Neumann algebra of a group; it 
highlights the use of the extension property.
\end{remark}

%%%%%%%%%%%%%%%%%%%%%%%%%%%%%%%%%%%%%%%%%

\end{document}